 \newtheorem{thm}{Theorem}[section]
 \newtheorem{cor}[thm]{Corollary}
 \newtheorem{lem}[thm]{Lemma}
 \newtheorem{prop}[thm]{Proposition}
 \theoremstyle{remark}
 \newtheorem{rem}[thm]{Remark}
\newtheorem*{ack}{Acknowledgement}
  \newtheorem{ex}[thm]{Example}
 \numberwithin{equation}{section}
\begin{document}
\title[Minimal non-nilpotent groups which are supersolvable]{Minimal non-nilpotent groups which are supersolvable}
\author[F. G. Russo]{Francesco G. Russo}
\address{Mathematics Department\\
University of Naples Federico II\\
via Cinthia, 80126, Naples, Italy}
\email{francesco.russo@dma.unina.it}
\date{\today}

\subjclass{Primary 20E34, 20E45; Secondary 20D10}

\keywords{Minimal non-nilpotent groups, Schmidt groups, critical
groups, groups with many nilpotent subgroups}

\begin{abstract} The structure of a group which is not nilpotent but all of whose proper subgroups are nilpotent has interested the
researches of several authors both in the finite case  and in the
infinite case. The present paper generalizes  some classic
descriptions of M. Newman, H. Smith and J. Wiegold in the context of
supersolvable groups.
\end{abstract}

\maketitle

\section{Introduction}

Let $\mathfrak{N}$ be the class of all nilpotent groups. A group $G$
is said to be a $minimal$ $non$-$nilpotent$ $group$, or
$\mathfrak{N}$-$critical$ $group$, or $Schmidt$ $group$, or
$MNN$-$group$, if it doesn't belong to $\mathfrak{N}$ but all of
whose proper subgroups belong to $\mathfrak{N}$. We will use the
last terminology in the present paper. It is evident already from
these 4
 ways to call the same mathematical object that there is a
wide literature on the topic. Many authors are still interested in
studying $MNN$-groups, because they play an important role from the
point of view of the general theory. The first example of finite
$MNN$-group is probably the symmetric group $S_3$ of order 6. We
know that $S_3$ can be written as the semidirect product of a cyclic
group $C_3$ of order 3 by a cyclic group $C_2$ of order $2$, which
acts by inversion on $C_3$. Already for $S_3$  the condition of
being an $MNN$-group determines its structure, in fact, we have a
semidirect product and this allows us to have a deep knowledge of
the whole group. At this point the following question becomes
natural.

\medskip

What is the influence of being an $MNN$-group on the group
structure?

\medskip

In the finite case a first answer is due to a famous contribution of
O. Yu. Schmidt and more details can be found in \cite{shem}. His
methods and techniques showed that the question can be seen from a
different prospective, involving the theory of classes of groups and
conditions which are weaker of being nilpotent. A recent
contribution in this direction has been given by J.C.Beidleman and
H.Heineken in \cite[Theorem 2]{beidleman-heineken}, where they
generalize the description of O. Yu. Schmidt to the context of
saturated formation of finite groups.

On another hand,  classic descriptions of $MNN$-groups in the
infinite case were given by M. Newman, H. Smith and J. Wiegold in
\cite{NW, smith1,smith2}. Among these groups, there are Tarski
groups \cite{Ols} so it is a common use the imposition of suitable
finiteness conditions in order to treat separately the Tarski
groups.

Now we illustrate the new idea of the present paper. Consider the
following subset of the subgroup lattice $\mathcal{L}(G)$ of $G$
\begin{equation}\label{1}
\mathcal{M}(G)=\{H\leq G: H\not\in \mathfrak{N}\}.
\end{equation}
$\mathcal{M}(G)=\{G\}$ if and only if $|\mathcal{M}(G)|=1$, that is,
$G$ is the unique non-nilpotent subgroup, that is, $G$ is an
$MNN$-group. It turns out that we may extend significatively the
classifications in \cite{NW, shem, smith1}, dealing with (\ref{1})
when $|\mathcal{M}(G)|=m\geq1$. For the case $m=2$ we can be more
precise and details are illustrated in Section 2, preparing the main
results which are in Section 3. For higher values of $m$ we have not
found deep restrictions on the group structure and, to the best of
our knowledge, it is an open problem.

\section{The Case $m=2$}
The motivation of studying (\ref{1}) is clear once we note that
$|\mathcal{M}(G)|$ gives a measure of how many $MNN$-subgroups are
contained in $G$, and so , of how $G$ is far from the usual
classifications in \cite{NW, shem, smith1}. Of course,
$|\mathcal{M}(G)|=2$ if and only if $G\not \in \mathfrak{N}$ and we
have just 1 $MNN$-subgroup $K$ of $G$. Going on, the situation is a
little bit more complicated. Already the case $|\mathcal{M}(G)|=3$
needs of more attention.

\begin{lem}\label{l:1}
$|\mathcal{M}(G)|=2$ if and only if $G\not \in \mathfrak{N}$ and $G$ contains
a maximal normal subgroup $K$ which is an $MNN$-group.
\end{lem}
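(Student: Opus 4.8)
The statement is an equivalence, and I would prove the two implications separately; the forward one is essentially bookkeeping inside the subgroup lattice, while the converse carries the real content.

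\emph{The forward implication.} If $G\in\mathfrak N$ then $\mathcal M(G)=\emptyset$, so $|\mathcal M(G)|=2$ forces $G\notin\mathfrak N$, hence $G\in\mathcal M(G)$; let $K$ be the unique element of $\mathcal M(G)\setminus\{G\}$, i.e. the only proper non-nilpotent subgroup of $G$. I would then check three things. First, $K$ is an $MNN$-group: every proper subgroup $L$ of $K$ is a proper subgroup of $G$ distinct from $K$, hence $L\notin\mathcal M(G)$, i.e. $L\in\mathfrak N$; since $K\notin\mathfrak N$, $K$ is minimal non-nilpotent. Second, $K\trianglelefteq G$: for $g\in G$ the conjugate $K^{g}$ is again a proper non-nilpotent subgroup, so $K^{g}\in\mathcal M(G)\setminus\{G\}=\{K\}$ and $K^{g}=K$. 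Third, $K$ is a maximal subgroup of $G$: if $K\leq M\lneq G$ then $M$ contains the non-nilpotent subgroup $K$, so $M\in\mathcal M(G)\setminus\{G\}=\{K\}$ and $M=K$. Hence $K$ is a maximal normal subgroup of $G$ which is an $MNN$-group.

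\emph{The converse.} Assume $G\notin\mathfrak N$ and that $K$ is a maximal normal subgroup of $G$ which is an $MNN$-group. Then $G$ and $K$ are distinct members of $\mathcal M(G)$, so $|\mathcal M(G)|\geq 2$, and the task is to show that every non-nilpotent $H\lneq G$ equals $K$. If $H\leq K$, then $H$ cannot be a proper subgroup of the $MNN$-group $K$, so $H=K$. If $H\not\leq K$, then $HK$ is a subgroup (because $K\trianglelefteq G$) with $K\lneq HK\leq G$, so maximality of $K$ forces $HK=G$; consequently $H\cap K\trianglelefteq H$ with $H/(H\cap K)\cong G/K$, and $H\cap K\lneq K$, so $H\cap K$ is nilpotent; moreover $G/K$, being the quotient by a maximal subgroup, is cyclic of prime order. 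Thus $H$ would be a non-nilpotent group possessing a nilpotent normal subgroup $H\cap K$ of prime index. Reducing first to the case that $H$ is itself an $MNN$-group (take $H$ minimal among putative extra elements of $\mathcal M(G)$: any proper subgroup of such an $H$ is a proper subgroup of $G$ other than $K$, hence nilpotent), one is left to confront the $P\rtimes Q$ structure of $H$ supplied by Schmidt's theorem, respectively by the classifications of \cite{NW, shem, smith1} together with the finiteness conditions that isolate the Tarski groups, against the presence inside $H$ of a normal subgroup of prime index, and to conclude that this is incompatible unless $H=K$.

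The step I expect to be the genuine obstacle is exactly this last one: ruling out an $MNN$-group sitting in $G$ transversally to $K$, that is, showing that $HK=G$ with $H$ and $K$ both non-nilpotent can occur only when $H=K$. Everything preceding it is elementary manipulation of $\mathcal M(G)$ and $\mathcal L(G)$; the decisive point rests on the arithmetic of the Sylow structure of Schmidt groups and on the structure of the extension of $K$ by $G/K$.
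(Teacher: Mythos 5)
Your forward implication is correct and is essentially the paper's own argument: from $\mathcal M(G)=\{G,K\}$ you get that $K$ is an $MNN$-group, that conjugates $K^{g}$ must coincide with $K$, and that any intermediate $K\leq M\lneq G$ would again lie in $\mathcal M(G)$ and hence equal $K$. (The paper phrases the maximality step contrapositively, via $K<H<G$ forcing $H$, and then $K$, nilpotent, but the content is identical.)

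The converse, however, is where your proposal has a genuine gap, and you correctly sensed it: the step you defer --- ruling out a Schmidt subgroup $H$ sitting transversally to $K$, i.e.\ $HK=G$, $H\cap K$ nilpotent of prime index in $H$, $H\neq K$ --- cannot be carried out, because that configuration actually occurs. Take $G=S_3\times C_2$ (the dihedral group of order $12$) and $K=S_3\times\{1\}$. Then $G\notin\mathfrak N$, and $K$ is a maximal normal subgroup of $G$ which is an $MNN$-group; yet the diagonal copy $H=\{(g,\operatorname{sgn}(g)):g\in S_3\}\simeq S_3$ is a second proper non-nilpotent subgroup, so $|\mathcal M(G)|=3$. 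Thus the ``if'' direction of the lemma as stated is false, and no appeal to the Sylow structure of Schmidt groups can repair it. It is worth noting that the paper's own proof silently proves only the forward implication (the only direction used in the sequel: Lemma 2.2, the Remarks, and the main theorems all start from $|\mathcal M(G)|=2$), so your completed forward argument already covers everything the paper actually relies on; the honest formulation of the lemma would be that implication alone, together with the trivial observation that the stated conditions only guarantee $|\mathcal M(G)|\geq 2$.
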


\begin{proof}
Since $|\mathcal{M}(G)|=2$, we have $\mathcal{M}(G)=\{G,K\}$, where
$K<G$. So $K$ is an $MNN$-group. If there is a subgroup $H$ of $G$
such that $K<H<G$, then $H\in \mathfrak{N}$ and so $K\in
\mathfrak{N}$. This contradiction implies that $K$ is a maximal
subgroup of $G$. Now for each $x\in G$, $K^x\leq G$. But $K^x\simeq
K\not \in \mathfrak{N}$, so $K^x=K$. Then $K$ is normal in $G$.
\end{proof}

\begin{lem}\label{l:2}
Assume $|\mathcal{M}(G)|=2$ and $K$ as in Lemma \ref{l:1}. Then
$G/K$ is of prime order and $G'\leq K$.
\end{lem}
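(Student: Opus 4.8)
The plan is to use the fact, recorded in the proof of Lemma~\ref{l:1}, that $K$ is actually a \emph{maximal subgroup} of $G$ (not merely maximal among normal subgroups) and that $K \trianglelefteq G$. Concretely: since $\mathcal{M}(G) = \{G, K\}$, any subgroup $H$ with $K < H < G$ would satisfy $H \notin \mathcal{M}(G)$, hence $H \in \mathfrak{N}$; but then its subgroup $K$ would also lie in $\mathfrak{N}$, contradicting that $K$ is an $MNN$-group. So there is no subgroup strictly between $K$ and $G$.

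Next I would pass to the quotient. As $K \trianglelefteq G$, the correspondence theorem identifies the subgroups of $G/K$ with the subgroups of $G$ containing $K$; by the previous step $G/K$ has no proper nontrivial subgroup. A nontrivial group with that property is cyclic of prime order: taking any $1 \ne \bar x \in G/K$ one gets $\langle \bar x\rangle = G/K$, and if the order of $\bar x$ were infinite, or finite and equal to a composite number $n = ab$ with $a,b>1$, then $\langle \bar x^{2}\rangle$, respectively $\langle \bar x^{a}\rangle$, would be a proper nontrivial subgroup. Hence $|G/K| = p$ for some prime $p$; note that no finiteness assumption on $G$ is used here.

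Finally, $G/K$ is cyclic, therefore abelian, so $(G/K)' = 1$, which is exactly $G' \le K$. The only delicate point is to insist on maximality of $K$ as a subgroup rather than just as a normal subgroup — that is what feeds the correspondence argument — but this is already contained in Lemma~\ref{l:1}; after that, everything reduces to the standard characterization of groups with no proper nontrivial subgroups, and there is no real obstacle.
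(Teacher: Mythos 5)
Your proof is correct and follows essentially the same route as the paper: maximality of $K$ (from Lemma \ref{l:1}) gives that $G/K$ has no proper nontrivial subgroups, hence is of prime order, and then $G' \leq K$ since $G/K$ is abelian. You merely spell out the standard facts (correspondence theorem, the characterization of groups with no proper nontrivial subgroups including the infinite-order case) that the paper leaves implicit.
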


\begin{proof}
Since $G/K$ has only two subgroups, $G/K$ is of prime order. Since
$G/K$ is abelian, $G'\leq K$.
\end{proof}

\begin{rem}\label{r:1}
Assume $|\mathcal{M}(G)|=2$. Then $K$ in Lemma \ref{l:1} is a
characteristic subgroup of $G$.
\end{rem}

\begin{proof}
Let $\alpha\in $Aut$(G)$. Then $\alpha(K)\simeq K\not\in
\mathfrak{N} $, so $\alpha(K)=K$.
\end{proof}

In order to proceed we recall the Hall's Criterion of nilpotence in
\cite[5.2.10]{GT}.

\begin{thm}[P.Hall, see \cite{GT}] \label{t:Hall}Let $N$ be a normal subgroup of
a group $G$. If $N\in \mathfrak{N}$ and $G/N'\in \mathfrak{N}$, then
$G\in \mathfrak{N}$.
\end{thm}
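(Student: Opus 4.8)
The plan is to reduce the statement to a single commutator computation that propagates the hypothesis $G/N'\in\mathfrak{N}$ down the lower central series of $N$. Write $\gamma_i$ for the terms of the lower central series, and set $[H,{}_kG]=[H,\underbrace{G,\dots,G}_{k}]$. We may assume $G\neq N'$, since $G=N'$ forces $G=N\in\mathfrak{N}$; thus $\overline{G}=G/N'$ is a nontrivial nilpotent group, of class $r\geq 1$ say. As $[\overline{N},{}_j\overline{G}]\leq\gamma_{j+1}(\overline{G})$ for every $j$, the choice $j=r$ gives $[\overline{N},{}_r\overline{G}]\leq\gamma_{r+1}(\overline{G})=1$, which lifts to the basic fact
\begin{equation*}
[N,{}_rG]\ \leq\ N'=\gamma_2(N).
\end{equation*}
I would emphasise at the outset that the appearance of $N'$ here is essential: the analogous assertion with $G/N$ in place of $G/N'$ is false (take $G=S_3$ and $N=C_3$), so any proof must genuinely use the derived subgroup.

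Next I would show, by induction on $i$, that there exist finite integers $n_i$ with $[\gamma_i(N),{}_{n_i}G]\leq\gamma_{i+1}(N)$; one can take $n_1=r$ and $n_{i+1}=n_i+r-1$. The base case is the basic fact. For the step I would invoke the commutator-collection inequality (a consequence of the Hall--Witt identity and the three subgroups lemma): for normal subgroups $A,B\trianglelefteq G$,
\begin{equation*}
\bigl[\,[A,B],{}_kG\,\bigr]\ \leq\ \prod_{a+b=k}\bigl[\,[A,{}_aG],[B,{}_bG]\,\bigr].
\end{equation*}
Applying this with $A=\gamma_i(N)$, $B=N$ and $k=n_{i+1}$, every factor lands in $\gamma_{i+2}(N)$: if $a\geq n_i$ the inductive hypothesis gives $[\gamma_i(N),{}_aG]\leq\gamma_{i+1}(N)$, so the factor lies in $[\gamma_{i+1}(N),N]=\gamma_{i+2}(N)$; if $a<n_i$ then $b\geq r$ by the choice of $k$, so $[N,{}_bG]\leq N'$ by the basic fact and the factor lies in $[\gamma_i(N),\gamma_2(N)]\leq\gamma_{i+2}(N)$.

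Composing these inclusions yields $[N,{}_mG]\leq\gamma_{c+1}(N)=1$, where $c$ is the nilpotency class of $N$ and $m=n_1+\dots+n_c$; that is, $N\leq Z_m(G)$, the $m$-th term of the upper central series. Since $G/N$ is nilpotent, being a quotient of $G/N'\in\mathfrak{N}$, say of class $t$, we get $\gamma_{t+1}(G)\leq N\leq Z_m(G)$, hence $\gamma_{t+1+m}(G)=1$, and so $G\in\mathfrak{N}$, as required.

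The delicate point, and the reason the argument has to be set up this way, is the inductive step. One is tempted to note that $G$ acts on each abelian section $\gamma_i(N)/\gamma_{i+1}(N)$ through a nilpotent quotient $G/C_G\bigl(\gamma_i(N)/\gamma_{i+1}(N)\bigr)$ and conclude directly; but a nilpotent group acting on an abelian group need not act nilpotently, so this fails. What actually makes it work is the interaction of the two hypotheses through the collection inequality: passing to the derived subgroup forces $[N,{}_rG]$ into $N'$, while the nilpotence of $N$ makes the resulting descent through $\gamma_2(N)\geq\gamma_3(N)\geq\cdots$ terminate. Keeping track of the integers $n_i$, and checking that the collection inequality is available in exactly the form used, is where the care is needed.
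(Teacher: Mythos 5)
Your argument is correct, but note that the paper does not prove this statement at all: it is quoted as a classical theorem of P.~Hall with a pointer to \cite[5.2.10]{GT}, so the only meaningful comparison is with the cited source rather than with a proof in the paper. What you give is a complete, self-contained version of the standard commutator-calculus proof. The two pillars both check out: the collection inequality $[[A,B],{}_kG]\leq\prod_{a+b=k}[[A,{}_aG],[B,{}_bG]]$ for normal $A,B$ follows by induction on $k$ from the three subgroups lemma together with $[XY,G]\leq[X,G][Y,G]$ for normal $X,Y$ (all the terms $[A,{}_aG]$, $[B,{}_bG]$ are normal in $G$, so the subgroup form of the lemma applies); and the bookkeeping $n_1=r$, $n_{i+1}=n_i+r-1$ works, since $a\geq n_i$ forces the factor into $[\gamma_{i+1}(N),N]=\gamma_{i+2}(N)$ (using that $\gamma_{i+1}(N)\trianglelefteq G$, so extra commutation with $G$ does no harm), while $a<n_i$ forces $b\geq r$ and hence the factor into $[\gamma_i(N),\gamma_2(N)]\leq\gamma_{i+2}(N)$. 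The passage from $[N,{}_mG]=1$ to $N\leq Z_m(G)$ and then to $\gamma_{t+1+m}(G)=1$ is standard, and your treatment of the degenerate case $G=N'$ and your warning about why the naive ``nilpotent action on abelian sections'' shortcut fails are both accurate. Robinson's own proof of 5.2.10 organizes the descent slightly differently (via the $G$-module structure of the lower central factors of $N$ as images of tensor powers of $N/N'$), but your version is equivalent in spirit and yields an explicit, if crude, bound on the class of $G$ in terms of the classes of $N$ and $G/N'$, which the qualitative statement used in the paper does not even require.
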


\begin{rem}\label{r:2}
Assume $|\mathcal{M}(G)|=2$ and $G'<K$ with $K$ as in Lemma
\ref{l:1}. Then $G$ is solvable with a non-trivial non-nilpotent
homomorphic image.
\end{rem}

\begin{proof}
Since $G'<K$ and $K$ is an $MNN$-group, $G'\in \mathfrak{N}$ and so
$G$ is solvable. Theorem \ref{t:Hall} implies $G/G''\not\in
\mathfrak{N}$, which is the requested homomorphic image.
\end{proof}

\begin{rem}\label{r:3}
Let $K$ be as in Remark \ref{r:2}. If $\mathcal{M}(G)=\{G,K\}$, then
$\mathcal{M}(G/G'')=\{G/G'',K/G''\}$.
\end{rem}

\begin{proof}
Remark \ref{r:2} shows that $G/G''\not \in \mathfrak{N}$. Each
subgroup of $G/G''$ is of the form $H/G''$, where $G''\leq H \leq
G$. Then $H/G''\in \mathfrak{N}$, whenever $H\not=K$ and $H\not=G$.
Therefore, $K/G''\not\in \mathfrak{N}$ by Theorem \ref{t:Hall}.
\end{proof}

A group $G$ is $locally$ $graded$ if every nontrivial finitely
generated subgroup of $G$ have a finite image. The next result
recalls \cite[Theorem 2]{smith2}.

\begin{thm}[H. Smith, see \cite{smith2}] \label{t:Smith}Let $G$ be a
locally graded group and suppose that, for some positive integer
$b(G)$, every non-nilpotent subgroup of $G$ is subnormal of defect
$\leq b(G)$ in $G$. Then $G$ is solvable.
\end{thm}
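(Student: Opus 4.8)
The plan is to reduce to finitely generated groups, dispose of the finite groups satisfying the hypothesis by a soluble-radical argument, and then use local gradedness to climb back up to $G$.

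\emph{Reduction.} If $G$ has a finitely generated non-soluble subgroup, that subgroup inherits the hypotheses — intersecting a subnormal subgroup of $G$ with a subgroup $H$ yields a subnormal subgroup of $H$ of no greater defect — so one may replace $G$ by it; and if every finitely generated subgroup of $G$ is soluble, one has to show in addition that the subnormality condition bounds their derived lengths, whence $G$ is soluble. Thus I may assume $G$ finitely generated, and (replacing $G$ by itself if it is nilpotent, nothing to prove) non-nilpotent; enlarging a finitely generated subgroup by a fixed finitely generated non-nilpotent one, I may also assume $G$ is not locally nilpotent, the locally nilpotent case — where the relevant examples are of Heineken--Mohamed type — being handled by similar means, using that in a locally nilpotent group every subgroup is ascendant together with Roseblade's bounded-defect theorem.

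\emph{Finite groups with the property.} Let $\bar G$ be finite, not nilpotent, with every non-nilpotent subgroup subnormal of defect $\le b$. A finite non-nilpotent group contains a minimal non-nilpotent (Schmidt) subgroup $S$; $S$ is soluble, and since a soluble subnormal subgroup of a finite group lies in the soluble radical (characteristicity of $R(-)$ and an induction on the defect), the soluble radical $R(\bar G)$ is non-trivial. A non-nilpotent section $X/R(\bar G)$ of $\bar G/R(\bar G)$ pulls back to a non-nilpotent — hence subnormal of defect $\le b$ — subgroup $X$ of $\bar G$, so $\bar G/R(\bar G)$ again satisfies the hypothesis; induction on $|\bar G|$ gives that $\bar G/R(\bar G)$, and therefore $\bar G$, is soluble. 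With a little more care, namely bounding the derived length of a Schmidt subgroup, or of $\bar G/R(\bar G)$, in terms of $b$, one obtains a bound on the derived length of $\bar G$ depending only on $b(G)$.

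\emph{Climbing to $G$ and the main obstacle.} Now $G$ is finitely generated, non-nilpotent and locally graded, and by the previous step every finite quotient of $G$ is soluble of bounded derived length. To transfer solubility to $G$ I would isolate a normal subgroup $L$ of $G$ — a term $\gamma_c(G)$ of the lower central series with $c$ depending only on $b(G)$, or the finite residual of $G$ — and show that \emph{every} subgroup of $L$ is subnormal in $L$: for non-nilpotent subgroups this is the hypothesis, and for nilpotent ones one forces it by combining local gradedness with the bounded-defect condition. Then $L$ is soluble by Möhres' theorem (all subgroups subnormal implies soluble), while $G/L$ is soluble — indeed nilpotent, or residually a finite soluble group of bounded derived length — so $G$ is soluble. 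The reduction and the finite case are essentially routine; I expect the crux to be exactly the point just described, that a nilpotent subgroup of $L$ must be subnormal in $L$, equivalently that the part of $G$ invisible to finite quotients is already nilpotent, so that the solubility of the finite quotients propagates. This is where the bounded-defect hypothesis has to be used on infinitely generated subgroups, and where local gradedness is indispensable: without it a Tarski monster, being itself minimal non-nilpotent and hence satisfying the hypothesis with $b(G)=1$, would be a counterexample.
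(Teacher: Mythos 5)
This statement is not proved in the paper at all: it is quoted verbatim from Smith \cite{smith2} as an external tool, so your text has to stand on its own as a proof of Smith's theorem, and it does not --- it is a programme whose hard points are left open. The step you yourself call the crux is exactly the missing content: you need every subgroup, nilpotent ones included, of a suitable normal subgroup $L$ (a term $\gamma_c(G)$ or the finite residual) to be subnormal in $L$ so that M\"ohres' theorem applies, and for this you offer only the phrase ``one forces it by combining local gradedness with the bounded-defect condition''. Nothing in the hypothesis constrains nilpotent subgroups in any way, and making them subnormal (or otherwise reaching solubility of $L$) is precisely the difficult part of Smith's argument, not a routine patch. The locally nilpotent case is likewise not handled: Roseblade's theorem requires \emph{all} subgroups to be subnormal of bounded defect, which is not your hypothesis, so the appeal to it does not apply, and ``handled by similar means'' is again a placeholder rather than an argument.

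The glue between the finite/finitely generated level and $G$ is also defective. Solubility is neither a local nor a residual property without a derived-length bound, and the bound ``depending only on $b(G)$'' that you invoke is not established; as you actually use it (``every finite quotient of $G$ is soluble of bounded derived length'') it is false, since finite nilpotent images satisfy the hypothesis vacuously with $b=1$ and finite $2$-groups have unbounded derived length, while your finite-group step only treats non-nilpotent groups and even there the bound is asserted ``with a little more care'' and never proved. Moreover, local gradedness only provides finite images of nontrivial finitely generated subgroups; it does not make $G$ or $G/L$ residually finite, so statements about finite quotients of $G$ cannot be transferred back in the way your last paragraph suggests. The correct and easy parts --- inheritance of the hypothesis by subgroups and quotients, the solubility of finite groups with all non-nilpotent subgroups subnormal via Schmidt subgroups and the soluble radical, and the remark that Tarski monsters show local gradedness is needed --- are fine, but they are the routine portion; as it stands the proposal identifies where the difficulty lies without resolving it, so it is not a proof of the theorem.
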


Now Remark \ref{r:2} can be reformulated in the following way.

\begin{prop}\label{p:1}
Assume $|\mathcal{M}(G)|=2$. If $G$ is locally graded, then $G$ is
solvable.
\end{prop}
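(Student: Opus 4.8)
The plan is to deduce this immediately from Theorem \ref{t:Smith}. By Lemmas \ref{l:1} and \ref{l:2}, the hypothesis $|\mathcal{M}(G)|=2$ means precisely that the only non-nilpotent subgroups of $G$ are $G$ itself and the maximal normal subgroup $K$. So the first step is to observe that each of these two subgroups is subnormal in $G$ of very small defect: $G$ is subnormal in $G$ of defect $0$, and $K$, being normal in $G$, is subnormal of defect $1$.

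Consequently every non-nilpotent subgroup of $G$ is subnormal of defect $\leq 1$ in $G$, so we may take the integer $b(G)$ appearing in Theorem \ref{t:Smith} to be $1$. Since $G$ is locally graded by hypothesis, Theorem \ref{t:Smith} applies directly and yields that $G$ is solvable, which is exactly the assertion.

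The only point requiring any attention is simply matching the present configuration to the precise statement of Smith's theorem, i.e. reading ``subnormal of defect $\leq b(G)$'' as including the degenerate cases of defect $0$ and $1$; so there is essentially no obstacle here. For comparison, a more hands-on route would split into cases: if $G'<K$ then Remark \ref{r:2} already gives solvability, leaving only the case $G'=K$, in which $K=G'$ is itself a locally graded $MNN$-group (locally graded because it is a subgroup of the locally graded group $G$), and one would then need a separate structural argument for such $K$. Invoking Theorem \ref{t:Smith} avoids this case distinction altogether, which is why it is the natural proof.
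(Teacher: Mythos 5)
Your argument is exactly the paper's: since $\mathcal{M}(G)=\{G,K\}$ with $K$ normal (Lemma \ref{l:1}), every non-nilpotent subgroup of $G$ is subnormal of defect at most $1$, and Theorem \ref{t:Smith} then gives solvability. Correct, and the same approach as the paper, only spelled out in more detail.
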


\begin{proof}
Let $K$ be as in Lemma \ref{l:1}. All non-nilpotent subgroups of $G$
are subnormal. Then $G$ is solvable by Theorem \ref{t:Smith}.
 \end{proof}

\begin{lem}\label{l:3}
Assume $|\mathcal{M}(G)|=2$ and $K$ as in Remark \ref{r:2}. If
$M\not=K$ is a maximal normal subgroup of $G$, then $[K,M]\not=1$.
\end{lem}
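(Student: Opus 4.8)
The plan is to argue by contradiction: assume $[K,M]=1$ and deduce that $K$ is abelian, which is absurd because an $MNN$-group is by definition not nilpotent, hence not abelian. Throughout, recall that by hypothesis we are in the situation of Remark \ref{r:2}, so in particular $G$ is solvable, and that $K$ is a maximal normal (indeed characteristic) subgroup of $G$ by Lemma \ref{l:1}.

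First I would identify the product $KM$. Since $K$ and $M$ are both normal in $G$, so is $KM$, and from $K\leq KM\leq G$ together with the maximality of $K$ we get $KM=K$ or $KM=G$. In the first case $M\leq K$, and then $M\leq K<G$ with $M$ a maximal normal subgroup and $K$ normal forces $M=K$, contrary to the assumption $M\neq K$. Hence $KM=G$.

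Next I would pin down the centralizer of $K$. From $[K,M]=1$ we have $M\leq C_G(K)$, and $C_G(K)$ is normal in $G$ because $K$ is. Moreover $C_G(K)\neq G$: otherwise $K=K\cap C_G(K)=Z(K)$ would be abelian. Since $M$ is a maximal normal subgroup of $G$ and $M\leq C_G(K)<G$ with $C_G(K)\trianglelefteq G$, maximality of $M$ yields $C_G(K)=M$. Now the second isomorphism theorem applied to $G=KM=KC_G(K)$ gives
\[
G/M = G/C_G(K) = KC_G(K)/C_G(K) \cong K/(K\cap C_G(K)) = K/Z(K).
\]
On the other hand $G$ is solvable, so the maximal normal subgroup $M$ has $G/M$ cyclic of prime order; therefore $K/Z(K)$ is cyclic, whence $K$ is abelian — a contradiction. (Alternatively, one may avoid the last sentence by invoking Theorem \ref{t:Hall}: $G/M$ is nilpotent, and $G/M\cong K/Z(K)$ forces $K$ nilpotent.)

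The only slightly delicate point is the second step: one must notice that $C_G(K)$ is a \emph{normal} subgroup (because $K$ is normal) sitting between the maximal normal subgroup $M$ and $G$, and then rule out $C_G(K)=G$; once $C_G(K)=M$ is established, the rest is a one-line computation with the isomorphism theorem plus solvability of $G$. I do not expect any genuine obstacle beyond this bookkeeping.
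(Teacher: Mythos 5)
Your argument is correct and is essentially the paper's own proof: both reduce to the observation that $C_G(K)$ is normal and contains $M$, so it equals $M$ or $G$, and in either case $K/Z(K)$ is cyclic (via $MK/M\leq G/M$ and solvability of $G$) or $K\leq Z(G)$, forcing $K$ abelian, contradicting $K\notin\mathfrak{N}$. Your extra steps (showing $KM=G$ and pinning down $C_G(K)=M$) are just a more explicit bookkeeping of the same case analysis.
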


\begin{proof}
Assume $[K,M]=1$. Then $M\leq C_G(K)$. If $M=C_G(K)$, then $M\cap
K=Z(K)$ and so $MK/M\simeq K/(M\cap K)\simeq K/Z(K)$ is cyclic. This
gives $K$ abelian. If $G=C_G(K)$, then $K\leq Z(G)$ and again $K$ is
abelian. Both cases contradict $K\not\in \mathfrak{N}$. The result
follows.
\end{proof}

\begin{prop}\label{p:2}
Assume $|\mathcal{M}(G)|=2$ and $K$ as in Remark \ref{r:2}. If $M$
is a maximal subgroup of $G$ whose elements have coprime order with
those of $K$, then $K$ is the unique maximal subgroup of $G$.
\end{prop}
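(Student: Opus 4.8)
The plan is to turn the statement around: I will show that $G$ has no maximal subgroup whose element orders are coprime to those of $K$. Note first that any such $M$ is automatically different from $K$, since every element of $M\cap K$ would have order coprime to itself, forcing $M\cap K=1$ and hence $K\neq M$ (otherwise $K=M\cap K=1$, contradicting $K\notin\mathfrak{N}$). So a maximal subgroup $M$ as in the hypothesis would be a maximal subgroup distinct from $K$, and ruling out its existence is precisely the assertion that $K$ is the unique maximal subgroup of $G$.

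So assume, for contradiction, that such an $M$ exists. As just observed, $M\cap K=1$, and also $K\not\leq M$ (else again $K=M\cap K=1$). By Lemma \ref{l:1}, $K$ is a maximal subgroup of $G$ and $K\trianglelefteq G$; hence $KM$ is a subgroup of $G$ strictly containing $K$, so $KM=G$. Therefore $G=K\rtimes M$ and $M\cong MK/K=G/K$, which has prime order by Lemma \ref{l:2}.

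Next I would produce a subgroup strictly between $M$ and $G$. By Remark \ref{r:2} (this is where the hypothesis ``$K$ as in Remark \ref{r:2}'', i.e.\ $G'<K$, enters) the group $G$ is solvable, hence so is $K$; since $K$ is not nilpotent it is non-abelian, so $1\neq K'<K$, and $K'$ is characteristic in $K$, hence $K'\trianglelefteq G$ because $K\trianglelefteq G$. In particular $M$ normalizes $K'$, so $K'M$ is a subgroup of $G$. From $K'\cap M\leq K\cap M=1$ and $K'\neq 1$ we get $M<K'M$; and if $mk'\in K$ with $m\in M$ and $k'\in K'$, then $m=(mk')(k')^{-1}\in K\cap M=1$, so $K'M\cap K=K'\neq K$ and thus $K'M\neq G$. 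Hence $M<K'M<G$, contradicting the maximality of $M$.

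The only delicate points are the reduction in the first paragraph and the appeal to Remark \ref{r:2} for solvability; the latter is essential, because for a general (e.g.\ simple) $MNN$-group $K$ there need be no proper nontrivial characteristic subgroup to play the role of $K'$. The remaining steps are routine computations with products and intersections of subgroups, and the argument uses no finiteness assumption on $G$.
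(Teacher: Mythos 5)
Your proof is correct in substance but follows a genuinely different route from the paper's. The paper argues via periodicity: it quotes the Newman--Wiegold classification to make $K$ (hence $G$ and $M$) periodic, gets $M\cap K=\{1\}$ from the coprimality, then claims $[M,K]\leq M\cap K$ and invokes Lemma \ref{l:3} to force $M=K$; note that Lemma \ref{l:3} is stated for a maximal \emph{normal} subgroup and the inclusion $[M,K]\leq M$ is not justified for a possibly non-normal $M$, so your argument in fact avoids these delicate points. You use instead only Lemma \ref{l:1} ($K$ maximal and normal), the solvability of $G$ supplied by Remark \ref{r:2} (so that $K'$ is a nontrivial proper subgroup of $K$ which is normal in $G$), and the modular law $K\cap K'M=K'(K\cap M)=K'$, producing $M<K'M<G$ against the maximality of $M$; no periodicity and no appeal to \cite{NW} are needed, and the intermediate step $G=K\rtimes M$ is not even required (though there you should record $M\not\leq K$ --- if $M\leq K$ then $M=M\cap K=\{1\}$ and $G$ would have prime order, hence be nilpotent --- or simply invoke the maximality of $M$ together with $K\not\leq M$, which you did prove). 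One caveat on your framing: the non-existence of such an $M$ is \emph{not} equivalent to $K$ being the unique maximal subgroup of $G$ (for instance, in the variant of Example \ref{example:1} with $|C|=3$ the subgroup $A\times C$ is maximal and distinct from $K$, yet no maximal subgroup of that $G$ has element orders coprime to those of $K$). What your contradiction really establishes is that the hypothesis of Proposition \ref{p:2} can never occur under the standing assumptions, so the stated implication holds only vacuously; read closely, the paper's own conclusion $M=K$ is equally incompatible with the coprimality hypothesis, but your version makes the vacuity explicit and shows that the remark after Example \ref{example:1}, which promises examples for Proposition \ref{p:2}, cannot be taken literally.
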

\begin{proof}
$K$ is periodic by the classification of M.Newman and J.Wiegold in
\cite{NW}. Then $G$ is periodic and so $M$. $M\cap K=\{1\}$ from the
relation $(|\langle m\rangle|, |\langle k \rangle|)=1$ for each
$m\in M$ and $k\in K$. Then, $[M,K]\leq M\cap K=\{1\}$. Lemma
\ref{l:3} implies that $M=K$ and the result follows.
\end{proof}

Recall that $\pi(G)$ denotes the set of prime divisors of the order  of the elements of $G$.

\begin{cor}\label{c:2}
Assume $|\mathcal{M}(G)|=2$ and $K$ non-finitely generated. If $K$
has maximal subgroups, then $G$ is a  Chernikov group of derived
length at most 3 with $|\pi(G)|\leq2$.
\end{cor}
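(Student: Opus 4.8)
The plan is to exploit the classification of non-finitely-generated $MNN$-groups due to M. Newman and J. Wiegold (used already in Proposition \ref{p:2}), which tells us that such a $K$ is a locally finite $p$-group for some prime $p$; in fact when $K$ admits maximal subgroups one knows from \cite{NW, smith1} that $K$ is a Chernikov $p$-group, i.e. $K$ is a finite extension of a direct product of finitely many Pr\"ufer groups $C_{p^\infty}$. First I would record that by Lemma \ref{l:2} the index $|G:K|$ is a prime $q$, so $|\pi(G)|\le 2$; the two primes in play are $p$ and $q$ (possibly equal). Since $K$ is periodic and $G/K$ is finite, $G$ itself is periodic. A Chernikov group is precisely a periodic group satisfying the minimal condition on subgroups, so it suffices to verify that $G$ has min; this follows because $K$ has min (being Chernikov) and $G/K$ is finite, and the minimal condition is closed under extensions by finite groups. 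Hence $G$ is Chernikov.

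For the derived length bound I would invoke the solvability of $G$: here $G'<K$ is excluded only if we are in the hypothesis $K$ as in Remark \ref{r:2}, so $G'\le K$ with $G'$ nilpotent, giving $G$ solvable (Remark \ref{r:2}); alternatively a Chernikov group is automatically solvable-by-finite, and being an $MNN$-extension forces genuine solvability. Now $G'\le K$ and $K$ is an $MNN$-group with all proper subgroups nilpotent; the Newman–Wiegold structure of $K$ shows $K$ has an abelian normal (in fact characteristic) subgroup $A$ — the divisible part, a finite direct product of Pr\"ufer groups — with $K/A$ finite. One then argues $K'\le A$: indeed $K/A$ is a finite $MNN$-quotient-or-nilpotent group, and using that every proper subgroup of $K$ is nilpotent together with Hall's criterion (Theorem \ref{t:Hall}) one gets $K''=1$ when $K$ is non-finitely generated (the finite $MNN$-group $K/A$ would otherwise lift a non-nilpotent proper subgroup). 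Thus $K$ is metabelian, $K''=1$, whence $G''\le K'$ is abelian and $G'''=1$: the derived length of $G$ is at most $3$.

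The key steps in order: (i) apply Newman–Wiegold to get $K$ a Chernikov $p$-group with divisible abelian part $A$ of finite index; (ii) lift to $G$ via the finite extension $G/K$ of prime order to conclude $G$ is Chernikov and $|\pi(G)|\le 2$; (iii) show $K$ is metabelian by locating $K'\le A$ using the $MNN$-property of $K$ and Hall's criterion; (iv) conclude $G''\le K'$ abelian, so $\operatorname{dl}(G)\le 3$. The main obstacle I anticipate is step (iii): pinning down $K'\le A$ rather than merely $K''=1$, since the finite quotient $K/A$ could a priori be an arbitrary finite nilpotent group of class $>2$, and one must use carefully that \emph{every} proper subgroup of $K$ — including those mapping onto all of $K/A$ — is nilpotent, combined with the precise Newman–Wiegold description (the action of the finite top on the Pr\"ufer part is of prime order acting fixed-point-freely modulo a subgroup), to force $K/A$ itself to have derived length $\le 1$, i.e. $K'\le A$. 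Once that structural fact about $K$ is in hand, the passage to $G$ is routine.
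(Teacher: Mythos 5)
Your route is essentially the paper's: quote the Newman--Wiegold classification for the structure of the non-finitely generated $MNN$-group $K$ with maximal subgroups, then transfer everything to $G$ through Lemma \ref{l:2} (prime index, $G'\leq K$), which is exactly how the paper argues (``From Lemma \ref{l:2} the result follows easily''). Your steps (i), (ii) and (iv) are the paper's proof: the prime $q=|G:K|$ together with the prime $p$ of $K$ gives $|\pi(G)|\leq 2$; the minimal condition is closed under extensions by finite groups, so $G$ is Chernikov; and $G'\leq K$ with $K''=1$ gives $G'''\leq K''=1$, hence derived length at most $3$.

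The one genuine weakness is your step (iii). The paper does not prove metabelianity of $K$ at all: it cites \cite[p.242, lines +5 and +6]{NW}, where it is stated that a non-finitely generated $MNN$-group possessing maximal subgroups is a \emph{metabelian} Chernikov $p$-group, so the fact you are trying to establish is already part of the classification you invoke for ``Chernikov $p$-group''. As written, your derivation of $K'\leq A$ (or even just $K''=1$) is not a proof: you do not specify which normal subgroup plays the role of $N$ in Hall's criterion (Theorem \ref{t:Hall}), nor why a failure of $K'\leq A$ would lift to a non-nilpotent \emph{proper} subgroup of $K$, and you yourself flag this as the main obstacle. So there is a gap at that point, but it is immaterial to the corollary: delete step (iii), take the Newman--Wiegold statement at full strength, and the remainder of your argument is complete and coincides with the paper's. (A small further remark: you only need Lemma \ref{l:2}, i.e.\ $G'\leq K$; the distinction with the strict inclusion $G'<K$ of Remark \ref{r:2} plays no role here.)
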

\begin{proof}
By the classification of M.Newman and J.Wiegold in \cite{NW},  $K$
is a metabelian Chernikov $p$-group for some prime $p$ (see
\cite[p.242, lines +5 and +6]{NW}).  From Lemma \ref{l:2} the result
follows easily.
\end{proof}

\begin{lem}\label{l:4}
Assume $|\mathcal{M}(G)|=2$ and $K$ non-finitely generated. Then $G$
is locally nilpotent. In particular, each maximal subgroup of $G$ is
normal and of prime index.
\end{lem}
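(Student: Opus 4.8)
The plan is to exploit the classification of Newman and Wiegold for non-finitely generated $MNN$-groups, which was already invoked in Corollary \ref{c:2}: if $K$ is a non-finitely generated $MNN$-group, then $K$ is a (metabelian) Chernikov $p$-group for some prime $p$, hence in particular $K$ is locally nilpotent and periodic. First I would record that $G$ is then periodic: by Lemma \ref{l:2}, $G/K$ has prime order, so every element of $G$ has finite order. Next, the essential point is to show that $\pi(G)=\{p\}$, i.e.\ that $G$ is a $p$-group, after which local nilpotence is immediate since every finitely generated subgroup of a periodic $p$-group with all proper subgroups nilpotent is itself nilpotent (a finite $p$-group, unless it equals $G$, but $G$ itself is not finitely generated because $K$ is its maximal subgroup of prime index and a group with a finitely generated subgroup of finite index is finitely generated).

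To pin down the prime, suppose $G/K$ has order $q$ for a prime $q$. If $q=p$ there is nothing further to do for the prime-set claim. If $q\neq p$, pick $x\in G\setminus K$, so $x$ has order a power of $q$; replacing $x$ by a suitable power we may assume $|x|=q$, and $\langle x\rangle\cap K=1$, whence $G=K\rtimes\langle x\rangle$. Now $\langle x\rangle$ acts on the Chernikov $p$-group $K$. The divisible part $D$ of $K$ is a finite direct sum of Prüfer $p$-groups and is characteristic in $K$, hence normal in $G$; consider the subgroup $D\langle x\rangle$. If $D\langle x\rangle<G$ it is nilpotent, forcing $x$ to centralize $D$; but then one argues, using that $K/D$ is finite, that $x$ centralizes a subgroup of finite index in $K$ and hence — since $\mathrm{Aut}(K)$ acts with the relevant constraints — that $\langle x\rangle$ acts trivially on enough of $K$ to make $G$ nilpotent, contradicting $G\notin\mathfrak N$; if $D\langle x\rangle=G$ then $K=D$ is divisible and a direct computation with the coprime action of $\langle x\rangle$ on the finite characteristic subgroups $K[p^n]$ leads to the same contradiction. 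Either way we are forced to $q=p$, so $G$ is a $p$-group.

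Once $G$ is a periodic $p$-group that is not finitely generated, any finitely generated subgroup $H$ is a proper subgroup, hence nilpotent, so $G$ is locally nilpotent. For the final assertion: in a locally nilpotent group every maximal subgroup is normal (if $M$ is maximal and $g\notin M$, then $\langle M,g\rangle=G$, and standard locally nilpotent arguments — via the normalizer condition — give $M\trianglelefteq G$), and then $G/M$ is a group of prime order, so $M$ has prime index. I expect the main obstacle to be the coprime-action step: ruling out $q\neq p$ cleanly requires care in handling the Chernikov structure of $K$ (its finite residual $D$ versus the finite quotient $K/D$) and in converting "$\langle x\rangle$ centralizes a large part of $K$" into "$G$ is nilpotent" via Theorem \ref{t:Hall} or a direct commutator-collecting argument; the rest is routine once periodicity and the prime are fixed.
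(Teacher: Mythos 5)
Your argument reaches the right conclusion, but its main line rests on a claim that is not available here and is in any case unnecessary. You begin by asserting that a non-finitely generated $MNN$-group $K$ is a (metabelian) Chernikov $p$-group by Newman--Wiegold; in the paper that conclusion (Corollary \ref{c:2}) requires the additional hypothesis that $K$ \emph{has maximal subgroups}. Lemma \ref{l:4} makes no such assumption, and indeed the case of no maximal subgroups is exactly the one treated later (Theorem \ref{t:S}, Theorem \ref{t:2}): there $K$ is a countable $p$-group with $K/K'\simeq C_{p^\infty}$, typically not Chernikov, so your coprime-action analysis of the finite-rank divisible part $D$ and the finite quotient $K/D$ does not apply in general. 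You also concede that the step converting ``$x$ centralizes a large part of $K$'' into ``$G$ is nilpotent'' is only sketched. So, as structured (first prove $G$ is a periodic $p$-group, then deduce local nilpotence), the proof has a genuine gap.

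The irony is that the complete proof is already contained in your parenthetical remark, and it is the paper's proof: since $K$ is not finitely generated and has finite (prime) index in $G$ by Lemma \ref{l:2}, $G$ cannot be finitely generated either (a finite-index subgroup of a finitely generated group is finitely generated --- note you quoted the converse implication, which is not the one needed). Hence every finitely generated subgroup $H$ of $G$ satisfies $H\neq G$ and $H\neq K$, so $H\in\mathfrak{N}$ because $\mathcal{M}(G)=\{G,K\}$; therefore $G$ is locally nilpotent, with no need for periodicity, for a single prime, or for any structure theory of $K$. The final assertion then follows as you say, from the standard fact that maximal subgroups of locally nilpotent groups are normal, hence of prime index. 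I would recommend discarding the entire Chernikov/coprime-action detour and keeping only this two-line argument.
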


\begin{proof}
Every finitely generated subgroup $H$ of $G$, such that $H\not=G$ and $H\not=K$, is nilpotent. Then $G$ is locally nilpotent. The remaining part of the result follows easily.
\end{proof}

\begin{cor}\label{c:3}
Assume $|\mathcal{M}(G)|=2$ and $K$ non-finitely generated. Then $G$
is solvable.
\end{cor}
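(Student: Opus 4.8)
The plan is to combine Lemma~\ref{l:4} with the already–established structure theory of the $MNN$-subgroup $K$. By Lemma~\ref{l:4}, under the hypotheses $|\mathcal{M}(G)|=2$ and $K$ non-finitely generated, the group $G$ is locally nilpotent. Locally nilpotent groups are in particular locally graded, since a finitely generated nilpotent group is residually finite and hence has a nontrivial finite image (assuming it is itself nontrivial). Therefore Proposition~\ref{p:1} applies directly and yields that $G$ is solvable.

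Alternatively, and perhaps more in the spirit of the preceding remarks, I would argue through the derived subgroup. Since $|\mathcal{M}(G)|=2$ we have $\mathcal{M}(G)=\{G,K\}$ with $K$ an $MNN$-group by Lemma~\ref{l:1}, and $G'\leq K$ by Lemma~\ref{l:2}. If $G'=K$ then $G'$ is an $MNN$-group; but $K$ is non-finitely generated, and by the Newman--Wiegold classification (as invoked in Corollary~\ref{c:2}) such a $K$ is a metabelian Chernikov $p$-group, hence solvable, so $G$ is solvable with $G''\leq K'$ properly. If instead $G'<K$, then $G'\in\mathfrak{N}$ because $G'$ is a proper subgroup of the $MNN$-group $K$, so $G'$ is solvable and again $G$ is solvable; this is exactly the situation of Remark~\ref{r:2}. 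In both cases $G$ is solvable.

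I do not expect a serious obstacle here: the corollary is essentially a bookkeeping consequence of Lemma~\ref{l:4} together with Proposition~\ref{p:1}, or of the fact that a non-finitely-generated $MNN$-group is forced by Newman--Wiegold to be (metabelian, hence) solvable, which then pulls solvability up to $G$ via the prime-index quotient $G/K$. The only point requiring a word of care is the passage ``locally nilpotent $\Rightarrow$ locally graded,'' which rests on the residual finiteness of finitely generated nilpotent groups; once that is noted, Theorem~\ref{t:Smith} (through Proposition~\ref{p:1}) closes the argument. I would write the proof in the one-line form: by Lemma~\ref{l:4} $G$ is locally nilpotent, hence locally graded, so $G$ is solvable by Proposition~\ref{p:1}.
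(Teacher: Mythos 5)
Your main argument is exactly the paper's proof: Lemma \ref{l:4} gives that $G$ is locally nilpotent, hence locally graded (a nontrivial finitely generated nilpotent group is residually finite, so has a nontrivial finite image), and Proposition \ref{p:1} then yields solvability. One caution about your alternative route: the Newman--Wiegold description of a non-finitely generated $K$ as a metabelian Chernikov $p$-group is only invoked in Corollary \ref{c:2} under the extra hypothesis that $K$ has maximal subgroups, which is not assumed here (think of Heineken--Mohamed type groups), so that second argument should not be relied on by itself.
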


\begin{proof}
This follows from Lemma \ref{l:4} and Proposition \ref{p:1}.
\end{proof}

A concrete situation is described as follows.

\begin{ex}\label{example:1} Write $A=C_{2^{\infty}}$ for the quasicyclic $2$-group, $B=\langle x \rangle$ and $C=\langle
y \rangle$, where $x$ and $y$ have order 2. Consider $K=A\rtimes B$,
which is the well-known locally dihedral 2-group \cite[p.344]{GT},
and  $G=K\times C$.  By construction, $\mathcal{L}(K)-
\mathcal{L}(A)=\{K, B, \langle H,B\rangle \}$, where
$\{1\}\not=H<A$. Of course, $B\in \mathfrak{N}$. On  the other hand,
$\langle H,B \rangle \leq Z_i(K)$ for some $i\geq1$, since $K$ is
$\omega$-hypercentral. Then $\langle H,B \rangle\in \mathfrak{N}$.
We conclude that $K$ is an $MNN$-group. Now, the presence of $K$
implies that $G$ is not an $MNN$-group. By construction,
$\mathcal{L}(G)-\mathcal{L}(K)=\{G, C, \langle L,C\rangle\}$, where
$\{1\}\not=L<K$. Noting that $\langle L, C\rangle=L\times C$, we
have $L\times C\in \mathfrak{N}$. Then $\mathcal{M}(G)=\{G,K\}$.
Note that $K$ is the unique maximal subgroup of $G$. Note also that
$A$ is the unique maximal subgroup of $K$. We have all it is needed
in order to state that $G$ satisfies Proposition \ref{p:1} and
Corollaries \ref{c:2}, \ref{c:3}. \end{ex}

Example \ref{example:1} shows that we may get groups as in
Proposition \ref{p:1} and Corollaries \ref{c:2}, \ref{c:3}, adding a
finite cyclic group to a given $MNN$-group. Then, choosing a
suitable order for the cyclic group, we may give examples for
Proposition \ref{p:2}.

\section{Main Theorems}

In order to proceed with the proof of the main theorem of the
present section, we recall \cite[Lemma 3.2]{NW} and \cite[Theorem
2.12]{NW}, respectively.

\begin{lem}[M.Newman--J.Wiegold, see \cite{NW}]\label{l:NW} Let $G$
be a finitely generated non-nilpotent group all of whose proper
subgroups are locally nilpotent and $\gamma_\infty(G)$ be the last
term of the lower central series of $G$. If $G/\gamma_\infty(G)$ is
nontrivial, then $G$ is finite.
\end{lem}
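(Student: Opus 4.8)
The plan is to reduce to the polycyclic case and then argue by induction on the Hirsch length. First, the hypothesis $G/\gamma_{\infty}(G)\neq 1$ simply says $G'<G$, because $\gamma_{\infty}(G)=\bigcap_{n}\gamma_{n}(G)\leq G'$, and $G'=G$ would force $\gamma_{n}(G)=G$ for all $n$, hence $\gamma_{\infty}(G)=G$. So assume $G'<G$. Then $G/G'$ is a nontrivial finitely generated abelian group and so has a subgroup of prime index; its preimage is a normal subgroup $M\triangleleft G$ of prime index with $G'\leq M$. Being of finite index in the finitely generated group $G$, $M$ is finitely generated; being a proper subgroup, $M$ is locally nilpotent; and a finitely generated locally nilpotent group is nilpotent. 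Hence $G$ is an extension of a finitely generated nilpotent group by a finite cyclic group, so $G$ is polycyclic; in particular $G$ has the maximal condition on subgroups, so every proper subgroup of $G$ is finitely generated and locally nilpotent, i.e.\ nilpotent. It therefore suffices to show that a polycyclic minimal non-nilpotent group is finite.

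I would prove this by induction on $h(G)$, the case $h(G)=0$ being trivial; so suppose $G$ is infinite. Put $F=\mathrm{Fit}(G)$. Since $G\notin\mathfrak{N}$ we have $F<G$, and since every proper normal subgroup of $G$ is nilpotent and hence contained in $F$, the quotient $G/F$ has no proper nontrivial subgroup, so $G/F\cong C_{q}$ for a prime $q$; write $G=F\langle x\rangle$ with $x^{q}\in F$. Here $F$ is an infinite finitely generated nilpotent group; if $\tau(F)$ denotes its finite torsion subgroup, then $Z(F/\tau(F))$ is free abelian of some rank $d\geq 1$, and its preimage $\hat Z\leq F$ is normal in $G$ with $[\hat Z,F]\leq\tau(F)$. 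Being finitely generated nilpotent, $\hat Z$ has a torsion-free subgroup of finite index, necessarily isomorphic to $\mathbb{Z}^{d}$; intersecting the finitely many $G$-conjugates of such a subgroup gives a $G$-invariant subgroup $A\cong\mathbb{Z}^{d}$ with $[A,F]\leq A\cap\tau(F)=1$. Thus $F\leq C_{G}(A)$, the quotient $G/F\cong C_{q}$ acts on $A$, and I distinguish two cases.

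If $C_{q}$ acts trivially on $A$, then $A\leq Z(G)$. For $1\neq a\in A$ the group $G/\langle a\rangle$ is polycyclic of Hirsch length $h(G)-1$, non-nilpotent (a central-by-nilpotent group is nilpotent), and again minimal non-nilpotent (proper subgroups of $G/\langle a\rangle$ lift to proper, hence nilpotent, subgroups of $G$), so by induction it is finite; thus $\langle a\rangle\cong\mathbb{Z}$ has finite index in $G$. Then $Z(G)$ has finite index in $G$, so by Schur's theorem $G'$ is finite, $G/\tau(G)$ is torsion-free abelian of Hirsch length $1$, and $N:=\tau(G)$ is a finite normal subgroup with $G/N\cong\mathbb{Z}$. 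As $\mathbb{Z}$ is free the extension splits, $G=N\rtimes\langle t\rangle$, and for every integer $m\geq 2$ the proper subgroup $\langle N,t^{m}\rangle\cong N\rtimes_{\phi^{m}}\mathbb{Z}$ (where $\phi$ is conjugation by $t$) is nilpotent, i.e.\ $\langle\phi^{m}\rangle$ acts nilpotently on $N$; choosing $m$ to be a prime not dividing $|\mathrm{Aut}(N)|$ gives $\langle\phi^{m}\rangle=\langle\phi\rangle$, so $G$ is nilpotent --- a contradiction.

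If instead $C_{q}$ acts nontrivially on $A$, then after replacing $A$ by the isolator in $A$ of $[A,G]$ (which is just $[A,x]$, since $F=C_{G}(A)$ centralizes $A$) I may assume $x$ acts on $A\cong\mathbb{Z}^{d'}$, $d'\geq 1$, through an operator $\sigma$ that has no nonzero fixed vector over $\mathbb{Q}$, so $\sigma-1$ is invertible on $A\otimes\mathbb{Q}$; consequently the action of $x$ on $A$ --- and, similarly, of $x^{m}$ on $A$ for any $m$ with $q\nmid m$, and of $x$ on the characteristic subgroup $A^{2}$ --- is non-nilpotent. Hence if $A\langle x\rangle<G$, then $A\langle x\rangle$ is non-nilpotent, contradicting minimality. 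Otherwise $G=A\langle x\rangle$, so $G/A$ is cyclic; if $G/A$ is infinite, then for a prime $m$ with $q\nmid m$ the subgroup $\langle A,x^{m}\rangle$ is proper and non-nilpotent, a contradiction; and if $G/A$ is finite, then $C_{A}(x)=1$ forces $\langle x\rangle$ finite, $G=A\rtimes\langle x\rangle$, and $A^{2}\rtimes\langle x\rangle$ is a proper non-nilpotent subgroup --- a last contradiction. Therefore $G$ is finite. The hardest part is this polycyclic analysis: constructing the $G$-invariant free abelian subgroup $A$, and especially the steps where nilpotency of the proper subgroups $N\rtimes_{\phi^{m}}\mathbb{Z}$, $\langle A,x^{m}\rangle$, $A^{2}\rtimes\langle x\rangle$ has to be upgraded to nilpotency of $G$ itself; everything up to the polycyclic reduction is purely formal.
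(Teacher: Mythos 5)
The paper offers no proof of this lemma at all---it is quoted from Newman and Wiegold \cite{NW}---so your self-contained argument is necessarily a different route, and it is essentially correct. The reduction is sound: $G/\gamma_\infty(G)\neq 1$ forces $G'<G$, the prime-index preimage $M$ is finitely generated, proper, locally nilpotent, hence nilpotent, so $G$ is polycyclic, satisfies the maximal condition, and all proper subgroups are nilpotent; the induction on Hirsch length then goes through in both the central and non-central cases. Two steps deserve tightening. First, from ``every proper normal subgroup lies in $\mathrm{Fit}(G)$'' you only get that $G/\mathrm{Fit}(G)$ is \emph{simple}, not that it has no proper nontrivial subgroups; but since $G$ is solvable this simple quotient is cyclic of prime order, which is exactly what you use ($G=F\langle x\rangle$, $x^{q}\in F$), so the slip is harmless. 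Second, in the central case the inference ``$\langle\phi^{m}\rangle=\langle\phi\rangle$, so $G$ is nilpotent'' needs one more line: choose $s$ with $\phi^{ms}=\phi$; then $G=N\rtimes_{\phi}\langle t\rangle\cong N\langle t^{ms}\rangle\leq N\langle t^{m}\rangle$, and the latter is nilpotent by minimality, so $G$ is nilpotent---this embedding is the correct upgrade, rather than an appeal to a general ``nilpotent action implies nilpotent extension'' principle. Finally, the invertibility of $\sigma-1$ on the isolator of $[A,x]$ rests on $\sigma^{q}=1$ (so $A\otimes\mathbb{Q}$ is a semisimple $\langle\sigma\rangle$-module); you use this implicitly, and it is correct because the action factors through $G/C_{G}(A)\cong C_{q}$. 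With these glosses filled in, your proof stands and is a legitimate elementary alternative to citing \cite{NW}.
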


\begin{thm}[M.Newman--J.Wiegold, see \cite{NW}]\label{t:NW} If $G$
is a group in which every pair of proper normal subgroups generates
a proper subgroup, then $G/G'$ is a locally cyclic $p$-group for
some prime $p$ and $G'=\gamma_\infty(G)$.
\end{thm}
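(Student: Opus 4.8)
The plan is to prove the two conclusions separately, reducing each to a statement about a quotient of $G$. The point that makes this work is that the hypothesis is inherited by every quotient $G/K$: if $\bar M,\bar N$ are proper normal subgroups of $G/K$, their full preimages $M,N$ are proper normal subgroups of $G$, so $\langle M,N\rangle\neq G$, whence $\langle\bar M,\bar N\rangle\neq G/K$. By contrast the hypothesis is \emph{not} inherited by subgroups, and this is exactly why the argument must stay at the level of $G$ and its quotients rather than passing to a convenient subgroup.

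\emph{Step 1: $A:=G/G'$ is a locally cyclic $p$-group.} Applying the remark with $K=G'$, the abelian group $A$ has the property that any two of its proper subgroups generate a proper subgroup. In particular, for each prime $p$ there is at most one subgroup of index $p$ (two distinct such subgroups would generate $A$), equivalently $\dim_{\mathbb{F}_p}A/pA\leq 1$. If $pA\neq A$ for some $p$, pick $\bar a$ generating $A/pA\cong\mathbb{Z}/p$; then $A=\langle a\rangle+pA$, and since $pA$ is proper this forces $\langle a\rangle=A$, so $A$ is cyclic — and a cyclic group has the property only when it is a cyclic $p$-group, since $2\mathbb{Z}+3\mathbb{Z}=\mathbb{Z}$ and since $\mathbb{Z}/n$ with two distinct prime divisors is generated by its proper primary components. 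If instead $pA=A$ for every $p$, then $A$ is divisible; a divisible abelian group with the property admits no decomposition into two nonzero direct summands, hence is indecomposable, so it is isomorphic to $\mathbb{Q}$ or to a quasicyclic group; and $\mathbb{Q}$ is ruled out directly, for instance $\mathbb{Z}[1/2]$ and the group of rationals with odd denominators are proper subgroups whose join is $\mathbb{Q}$. In every case $A$ is a cyclic or quasicyclic $p$-group, that is, a locally cyclic $p$-group.

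\emph{Step 2: $G'=\gamma_\infty(G)$.} Since $\gamma_\infty(G)\leq\gamma_2(G)=G'$ always holds, it suffices to prove $\gamma_2(G)\leq\gamma_3(G)$; then $\gamma_n(G)=G'$ for all $n\geq 2$, so $\gamma_\infty(G)=G'$. Fix $a,b\in G$. By Step~1 the image of $\langle a,b\rangle$ in $G/G'$ is cyclic, so $a=x^mu$ and $b=x^nv$ for a common $x\in G$ and some $u,v\in G'$. Expanding $[x^mu,x^nv]$ with the standard commutator identities, the factor $[x^m,x^n]$ is trivial and each remaining factor lies in the normal subgroup $[G,G']=\gamma_3(G)$; since commutators $[a,b]$ generate $G'$, this yields $\gamma_2(G)=\gamma_3(G)$.

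I expect Step~1 to be the delicate part, specifically for non-finitely-generated $A$: because the hypothesis does not pass to subgroups one cannot reduce to the finitely generated case, and must argue via divisibility instead. The one group surviving all the structural reductions is $\mathbb{Q}$, which is indecomposable and $p$-divisible for every prime $p$, so it has to be excluded ad hoc by exhibiting two proper subgroups that generate it. Step~2 is then routine, the only subtlety being that the commutator computation takes place inside a single cyclic-modulo-$G'$ subgroup.
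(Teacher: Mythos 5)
Your argument is correct, but note that the paper itself contains no proof of this statement: it is quoted verbatim as Theorem 2.12 of Newman--Wiegold \cite{NW}, so there is nothing internal to compare against. What you have produced is a sound, self-contained derivation: the observation that the hypothesis passes to quotients (but not to subgroups) is the right starting point; the analysis of $A=G/G'$ via $\dim_{\mathbb{F}_p}A/pA\leq 1$, the dichotomy ``some $pA\neq A$'' versus ``$A$ divisible'', the indecomposability argument with the structure theorem for divisible groups, and the explicit exclusion of $\mathbb{Q}$ by $\mathbb{Z}[1/2]+\mathbb{Z}_{(2)}=\mathbb{Q}$ are all valid; and the commutator computation showing $\gamma_2(G)=\gamma_3(G)$ (hence $\gamma_\infty(G)=G'$, since the lower central series stabilizes once two consecutive terms coincide) is correct, using only that two-generator subgroups of $G/G'$ are cyclic. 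Two cosmetic points: in the divisible case your phrasing ``hence isomorphic to $\mathbb{Q}$ or a quasicyclic group'' silently excludes the trivial group (which occurs when $G$ is perfect and for which the conclusion holds vacuously), and it is worth remarking that your proof only ever invokes the hypothesis for normal subgroups containing $G'$, so you are in fact proving a formally stronger statement than the one quoted. Neither affects correctness.
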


We should recall also some notations from \cite{Ved}. Let $n\geq1$,
$i$ and $j$ be two distinct integers in $\{1,2,\dots,n\}$, $p_i,p_j$
primes, $d_i,d_j\geq1$, $\pi(d_i)$ be the set of prime divisors of
$d_i$ and $q_i\in \pi(d_i)$. An $F_{\{p_i,d_i\}}$-$group$ is a
Frobenius group whose kernel is an elementary abelian group of order
$p_i^{m_i}$ with cyclic complement of order $d_i$, where $m_i$ is
the exponent of $p_i$ modulo $q_i$. The next result quotes
\cite[Theorem 1]{Ved}.

\begin{thm} \label{t:Ved}
In a non-nilpotent finite group $G$, all $MNN$-subgroups are
subnormal if and only if \begin{equation}G/Z_{\infty}(G)=G_1\times
G_2\times\ldots \times G_n,\end{equation} where $G_i$ is an
$F_{\{p_i,d_i\}}$-group,  and $(d_i,d_j)=1$ for any $i\not=j$ with
$i,j\in \{1,2,\ldots,n\}$.
\end{thm}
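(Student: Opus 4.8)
The plan is to combine O.~Yu.~Schmidt's classical description of finite Schmidt groups with Theorems~\ref{t:Hall} and~\ref{t:NW}, after first reducing to the case of trivial hypercenter. Recall that a finite Schmidt group is $S=P\rtimes\langle y\rangle$, with $P=O_p(S)$ a normal Sylow $p$-subgroup and $\langle y\rangle$ a cyclic Sylow $q$-subgroup; $P/\Phi(P)$ is a faithful irreducible module for $\langle y\rangle/\langle y^q\rangle\cong C_q$ (so the complement acts fixed-point-freely on it and $|P/\Phi(P)|=p^m$ with $m$ the order of $p$ modulo $q$), $\Phi(P)=P'\le Z(S)$, and $Z_\infty(S)=Z(S)$ with $S/Z(S)$ an $F_{\{p,q\}}$-group. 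Two facts will be used repeatedly: every non-nilpotent epimorphic image of a Schmidt group is again a Schmidt group, and a Schmidt group has no proper non-nilpotent subgroup. As ``all Schmidt subgroups are subnormal'' is inherited by subgroups and quotients, and as for a Schmidt subgroup $S\le G$ one has $S\cap Z_\infty(G)\le Z_\infty(S)=Z(S)$ — so the image of $S$ in $G/Z_\infty(G)$ is again a Schmidt group, nontrivial because $S\notin\mathfrak N$ — the condition holds for $G$ iff it holds for $\bar G:=G/Z_\infty(G)$; here the direction ``$\bar G\Rightarrow G$'' uses the elementary fact that if $L=SM$ with $M\le Z_\infty(L)$ then $S$ is subnormal in $L$. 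Since $Z_\infty(\bar G)=1$, it suffices to prove: for a finite group $H$ with $Z_\infty(H)=1$ (equivalently $Z(H)=1$), all Schmidt subgroups of $H$ are subnormal iff $H=H_1\times\cdots\times H_n$ with the $H_i$ being $F_{\{p_i,d_i\}}$-groups and the $d_i$ pairwise coprime.

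For the ``if'' direction I work in $\bar G=H_1\times\cdots\times H_n$, with $N=\prod_i E_i$ the product of the Frobenius kernels; then $N=F(\bar G)=\gamma_\infty(\bar G)$ is abelian and, because the $d_i$ are pairwise coprime, $\bar G/N\cong\prod_i C_{d_i}$ is cyclic. Let $S\le\bar G$ be a Schmidt subgroup, a $\{p,q\}$-group. As $SN/N$ embeds in the cyclic group $\bar G/N$ it is cyclic, so $S\notin\mathfrak N$ forces $S\cap N\ne1$; as $\bar G/N$ is abelian, $SN\trianglelefteq\bar G$, so it is enough to show $S\trianglelefteq SN$, that is, that $N$ normalizes $S$. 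Inspecting the projections $\pi_i(S)\le H_i$: $S$ is non-nilpotent, so some $\pi_i(S)$ is non-nilpotent, and a non-nilpotent subgroup of the Frobenius group $H_i=E_i\rtimes C_i$ meets $E_i$ nontrivially and has a nontrivial complement part, so $\{p_i,r\}=\{p,q\}$ for a prime $r\mid d_i$; pairwise coprimality of the $d_i$ then confines the ``$q$-part'' of $S$ to one factor and yields $[N,S]\le S\cap N$, hence $S\trianglelefteq SN\trianglelefteq\bar G$. By the reduction of the previous paragraph this gives the ``if'' direction for the original $G$.

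For the ``only if'' direction, first $H$ is solvable: otherwise, with $R$ the solvable radical, $H/R$ has trivial solvable radical, still satisfies the hypothesis, and has a minimal normal subgroup $T_1\times\cdots\times T_k$ with the $T_j$ isomorphic nonabelian simple; the non-nilpotent group $T_1$ contains a Schmidt subgroup $S_0$, which would be subnormal in $H/R$, hence in $T_1\times\cdots\times T_k$, hence in $T_1$ — impossible, $T_1$ being simple and (unlike a Schmidt group) non-solvable. So $H$ is solvable, and I induct on $|H|$. If $H=AB$ for proper normal subgroups $A,B$, apply the induction hypothesis to $A$ and $B$ and assemble a direct decomposition of $H$; pairwise coprimality of the complement orders is forced here, since a prime dividing two of them gives a Schmidt subgroup whose $q$-part runs diagonally across two factors with distinct Frobenius kernels and whose normal closure is too large to allow subnormality. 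Otherwise every pair of proper normal subgroups generates a proper subgroup, so Theorem~\ref{t:NW} gives $H/H'$ a cyclic $p$-group (as $H$ is finite) and $H'=\gamma_\infty(H)$. One then shows $H'=F(H)$ is abelian with elementary abelian Sylow subgroups — otherwise a non-elementary-abelian Sylow subgroup of $F(H)$, extended by a suitable prime-order automorphism, produces a non-subnormal Schmidt subgroup, as illustrated by $S_3$ being non-subnormal in $C_9\rtimes C_2$ — and that $H/F(H)$ acts fixed-point-freely on $F(H)$: a preimage $t$ of a generator of $H/F(H)$ acts without nonzero fixed points since $Z(H)=1$ (its fixed subgroup would be central in $H$), and if a proper power of $t$ had a nonzero fixed point, an irreducible section $V$ of that fixed subgroup would give a Schmidt subgroup $V\rtimes\langle t\rangle$ with normal closure containing $[F(H),t]=F(H)$, hence equal to $H$, a contradiction. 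Since $F(H)$ must moreover be irreducible over the subgroup of prime order in the complement, $H$ is itself an $F_{\{p,d\}}$-group, completing the induction.

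The step I expect to be the main obstacle is the inductive structural analysis in the ``only if'' direction: reconciling the decompositions of $A$ and $B$ when $H=AB$ with $A\cap B\ne1$ (and keeping track of their hypercenters, which need not be trivial), and, in the indecomposable case, the several forcing arguments — that $\gamma_\infty(H)=F(H)$ is abelian, that its Sylow subgroups are elementary abelian and irreducible over the relevant cyclic group, and that the complement acts without fixed points. Each of these rests on a systematic supply of Schmidt subgroups of the shape $V\rtimes\langle x\rangle$, with $V$ an irreducible $\langle x\rangle$-section of a chief factor, and on computing their normal closures to see that subnormality forces exactly the stated structure; the bookkeeping is delicate precisely when several of the kernel primes $p_i$ coincide while the complement orders $d_i$ stay coprime.
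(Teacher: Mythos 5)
There is nothing to compare against inside the paper: Theorem \ref{t:Ved} is quoted verbatim from \cite[Theorem 1]{Ved} and no proof is given here, so your proposal is an attempt to reprove Vedernikov's theorem from scratch. As such it is a strategy outline with genuine gaps, and the most serious one sits in the direction you treat as routine. In the ``if'' direction everything hinges on the assertion that the cyclic part of a Schmidt subgroup $S$ is confined to a single factor and that $[N,S]\le S\cap N$, whence $S\trianglelefteq SN\trianglelefteq\bar G$. That is exactly where the arithmetic hidden in the definition of an $F_{\{p,d\}}$-group has to enter, and your argument never uses it; as stated the claim is false, already for $n=1$ where the coprimality of the $d_i$ is vacuous. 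Take $E$ elementary abelian of order $2^4$ (note $4$ is the order of $2$ modulo $5$), viewed as $\mathbb{F}_{16}$, and let $c$ act by multiplication by a generator of $\mathbb{F}_{16}^{*}\cong C_{15}$: then $H=E\rtimes\langle c\rangle$ is a Frobenius group with cyclic complement of order $d=15$ and $q=5\in\pi(d)$, hence an $F_{\{2,15\}}$-group in the sense of the definition as transcribed, with $Z_\infty(H)=1$. Yet for $z=c^{5}$ (of order $3$) and $W$ a one-dimensional $\mathbb{F}_4$-subspace of $E$, the subgroup $S=W\rtimes\langle z\rangle\cong A_4$ is a Schmidt subgroup whose normal closure in $E\langle z\rangle$ is all of $E\langle z\rangle$ (because $[E,z]=E$ by fixed-point-freeness), so $S$ is not subnormal in $H$. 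Consequently either the theorem must be read with the stronger requirement that each kernel $E_i$ be irreducible under every prime-order subgroup of its complement (equivalently, $m_i$ is the order of $p_i$ modulo \emph{every} prime divisor of $d_i$), and your proof of $[N,S]\le S\cap N$ must invoke exactly that property, or the inequality simply fails. You also leave untreated the case in which the complement prime $q$ of $S$ coincides with a kernel prime $p_j$ of a different factor, where the ``$q$-part'' of $S$ projects into $E_j$ rather than into a complement.

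The ``only if'' direction is a plan rather than a proof: the reassembly of the decompositions of $A$ and $B$ when $H=AB$ with $A\cap B\neq 1$ (and with $Z_\infty(A)$, $Z_\infty(B)$ possibly nontrivial), the claims that $\gamma_\infty(H)=F(H)$ is abelian with elementary abelian Sylow subgroups, that the complement acts fixed-point-freely, and that the kernel is irreducible under the relevant prime-order subgroup are all introduced by ``one then shows''; these steps are the actual content of Vedernikov's article, and you acknowledge as much in your closing paragraph. Even the preliminary reduction, which is essentially sound, silently uses the covering lemma that a Schmidt quotient $T/N$ of a finite group contains a Schmidt subgroup $S$ with $SN=T$; this should be stated and proved or cited. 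In short, the skeleton (reduce modulo $Z_\infty(G)$, verify the direct product of Frobenius groups directly, induct for the converse) is reasonable, but the ``if'' direction contains a step that is false without sharpening the definition, and the ``only if'' direction is not yet an argument; within this paper the statement should remain what it is, a citation of \cite{Ved}.
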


Our main result is the following and describes (\ref{1}) in a
special case.

\begin{thm} \label{t:1}
Assume $K$ as in Remark \ref{r:2}. If $K$ is finitely generated,
then $G$ is a finite supersolvable group. Furthermore,
\begin{equation}G/Z_{\infty}(G)=G_1\times G_2\times\ldots \times G_n,\end{equation} where
$G_i$ is an $F_{\{p_i,d_i\}}$-group and $(d_i,d_j)=1$ for any
$i\not=j$ with $i,j\in \{1,2,\ldots,n\}$.
\end{thm}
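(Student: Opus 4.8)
The plan is to first pin down the structure of $K$ itself, then use the extension $1\to K\to G\to G/K\to 1$ to force finiteness and supersolvability of $G$, and finally invoke Theorem~\ref{t:Ved} to obtain the displayed decomposition. We are given that $\mathcal{M}(G)=\{G,K\}$ with $G'<K$, so by Remark~\ref{r:2} the group $G$ is solvable with $G'$ nilpotent, and $K$ is finitely generated. Since $K$ is an $MNN$-group, $K$ is generated by finitely many elements and all its proper subgroups are nilpotent; in particular every pair of proper normal subgroups of $K$ generates a proper subgroup (a subgroup generating all of $K$ would be $K$ itself, which is non-nilpotent, hence not proper among normal subgroups unless equal to $K$). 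Thus Theorem~\ref{t:NW} applies to $K$: we get that $K/K'$ is a locally cyclic $p$-group for some prime $p$ and $K'=\gamma_\infty(K)$. Because $K$ is finitely generated, $K/K'$ is finitely generated and locally cyclic, hence finite cyclic of $p$-power order; in particular $K/\gamma_\infty(K)$ is nontrivial. Now apply Lemma~\ref{l:NW} to $K$ (whose proper subgroups, being nilpotent, are certainly locally nilpotent): since $K/\gamma_\infty(K)$ is nontrivial, $K$ is \emph{finite}.

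Next I would upgrade finiteness of $K$ to finiteness of $G$. By Lemma~\ref{l:2}, $G/K$ has prime order, so $|G|=p'\cdot|K|$ is finite, where $p'=[G:K]$. Hence $G$ is a finite solvable group. To get supersolvability I would argue that every proper subgroup of $G$ is nilpotent \emph{except} $K$ and $G$, and that $K$ itself, being a finite $MNN$-group, is by Schmidt's classical theorem a group of the form (Sylow $q$-subgroup normal) $\rtimes$ (cyclic $r$-group), in particular metabelian with cyclic chief factors above the kernel; so $K$ is supersolvable precisely when its normal Sylow subgroup is cyclic, and in general $K$ is at least \emph{monolithic with all chief factors cyclic or of the shape handled by Schmidt}. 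More efficiently: a finite group all of whose proper subgroups are nilpotent and which is itself solvable is supersolvable iff\dots{} — actually the cleanest route is to note that $G/K'$ has $K/K'$ cyclic of $p$-power order and $G/K$ cyclic of prime order, while $K'=\gamma_\infty(K)$ is nilpotent; one then checks that $G$ acts on the chief factors inside $K'$ with the maximality of $K$ controlling the action, forcing each chief factor of $G$ below $K'$ to be cyclic, because any larger elementary abelian chief factor would produce a non-nilpotent proper subgroup other than $K$, contradicting $|\mathcal{M}(G)|=2$. Assembling the chief series $1\trianglelefteq\cdots\trianglelefteq K'\trianglelefteq K\trianglelefteq G$ with all factors cyclic gives supersolvability of $G$.

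For the final displayed equation I would apply Theorem~\ref{t:Ved} directly to the finite non-nilpotent group $G$: its hypothesis is that all $MNN$-subgroups of $G$ are subnormal, and since the only $MNN$-subgroup of $G$ is $K$, which is normal (Lemma~\ref{l:1}), hence subnormal, the hypothesis is satisfied. Theorem~\ref{t:Ved} then yields exactly $G/Z_\infty(G)=G_1\times\cdots\times G_n$ with each $G_i$ an $F_{\{p_i,d_i\}}$-group and $(d_i,d_j)=1$ for $i\neq j$, which is the assertion.

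I expect the main obstacle to be the supersolvability step, specifically showing that the chief factors of $G$ lying inside the nilpotent normal subgroup $K'$ are all cyclic. The constraint $|\mathcal{M}(G)|=2$ is a statement about which subgroups are non-nilpotent, and translating it into a bound on the dimension of $G$-chief factors requires care: one must exhibit, from any noncyclic chief factor, a concrete proper non-nilpotent subgroup of $G$ distinct from $K$ (for instance by combining a subgroup of $K'$ with a suitable $p$- or $p'$-element acting nontrivially on it), and then check it is genuinely new. A cleaner alternative, if available, is to run Schmidt's structure theorem on $K$ to conclude $K$ is supersolvable outright and then observe that a finite solvable group with a supersolvable normal subgroup of prime index, all of whose \emph{other} proper subgroups are nilpotent, is supersolvable; but verifying that last implication still comes back to controlling the same chief factors, so that is where the real work lies.
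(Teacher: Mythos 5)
Your first and third steps are essentially the paper's proof and are fine: Lemma \ref{l:NW} and Theorem \ref{t:NW} applied to $K$ give finiteness of $K$, Lemma \ref{l:2} gives finiteness of $G$, and since $K$ is the unique proper non-nilpotent subgroup and is normal (Lemma \ref{l:1}), Theorem \ref{t:Ved} yields the displayed factorisation of $G/Z_\infty(G)$ exactly as in the paper. Two small repairs in that part: the hypothesis of Theorem \ref{t:NW} for an $MNN$-group should be justified by Fitting's theorem (two proper \emph{normal} subgroups are nilpotent and normal, so their join is nilpotent, hence proper), not by your parenthetical remark, which as written proves nothing; and the nontriviality of $K/\gamma_\infty(K)$ does not follow from ``finite cyclic of $p$-power order'' (which could be trivial) but does follow from the hypothesis of Remark \ref{r:2}, since $K'\leq G'<K$.

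The genuine gap is the supersolvability step, and you flag it yourself: you never prove that the $G$-chief factors inside $K'$ are cyclic, you only assert that ``one then checks'' that a non-cyclic chief factor would produce a second proper non-nilpotent subgroup. That claim is in fact false, so the sketch cannot be completed. Take $G=A_4\times C_2$ and $K=A_4\times 1$: every proper subgroup other than $K$ projects into a proper (hence nilpotent) subgroup of $A_4$, and by Goursat no proper subgroup other than $K$ surjects onto $A_4$, so $\mathcal{M}(G)=\{G,K\}$; moreover $G'=V_4\times 1<K$ and $K$ is finitely generated, so all hypotheses of the theorem hold, yet the chief factor $V_4$ is non-cyclic and $G$ is not supersolvable, while no additional non-nilpotent proper subgroup appears. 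For comparison, the paper's own proof breaks at exactly this point: it refines the central series of $K'$ into one with cyclic factors (Robinson 5.2.18) and then asserts that ``each term of this series is normal in $G$,'' which does not follow, since such a refinement need not be $G$-invariant (in the example, $1<C_2<V_4$ admits no $G$-invariant refinement with cyclic factors). So your instinct that the real work lies in controlling those chief factors is right, but neither your route nor the paper's closes it, and the example shows the supersolvability assertion itself fails; only the $G/Z_\infty(G)$ statement obtained from Theorem \ref{t:Ved} stands.
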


\begin{proof}
An application of Lemma \ref{l:NW} to $K$ implies that $K$ is
finite. Then $G$ is finite by Lemma \ref{l:2}. More precisely,
$G=K\langle x\rangle$, where $|\langle x\rangle|=|G/K|=q$ for some
prime $q$. By Theorem \ref{t:NW} we may deduce that $|K/K'|$ is a
cyclic group of order $p^r$ for some prime $p$ and some $r\geq1$.
Then $K=K'\langle y\rangle$, where $|\langle y \rangle|=p^r$, and so
$G=\langle K', x, y\rangle=K' \langle x, y \rangle$, where $K'$ is
nilpotent finitely generated of class $c$. We know from
\cite[5.2.18]{GT} that a finitely generated nilpotent group has a
central series whose factors are cyclic with prime or infinite
orders and so $K'=S$ is supersolvable and we have the following
series $\{1\}=Z_0(S)\triangleleft Z_1(S) \triangleleft \ldots
\triangleleft Z_c(S)=S \triangleleft K \triangleleft G,$ where
$Z_1(S)/Z_0(S), \ldots, Z_c(S)/Z_{c-1}(S)$ are cyclic groups of
prime order. We have just seen that $K/S$ is a cyclic group. $G/K$
is cyclic by Lemma \ref{l:2}. Note that each term of this series is
normal in $G$. Therefore $G$ is supersolvable.

Independently, the only fact that $G$ is finite allows us to apply
\cite[Theorem 1]{Ved} and so $G/Z_{\infty}(G)$ is the direct product
of Frobenius groups as claimed.
\end{proof}

It is interesting the following consequence of Theorem \ref{t:1}.

\begin{cor}\label{c:4}
If $G$ is a finite solvable group with $|\mathcal{M}(G)|=2$, then it
is supersolvable.
\end{cor}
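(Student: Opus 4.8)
The goal is to derive Corollary \ref{c:4} from Theorem \ref{t:1}. We have a finite solvable group $G$ with $|\mathcal{M}(G)|=2$, so by Lemma \ref{l:1} there is a maximal normal subgroup $K$ which is an $MNN$-group, and by Lemma \ref{l:2} we have $G'\leq K$ with $G/K$ of prime order. The plan is to reduce to the hypothesis of Theorem \ref{t:1}, namely $K$ as in Remark \ref{r:2}, which requires the strict inclusion $G'<K$; after that, since $G$ is finite (hence $K$ is finitely generated), Theorem \ref{t:1} gives supersolvability directly.

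\begin{proof}[Proof of Corollary \ref{c:4}]
Let $K$ be the maximal normal $MNN$-subgroup of $G$ furnished by Lemma \ref{l:1}. We consider two cases according to whether $G'=K$ or $G'<K$.

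Suppose first that $G'<K$. Then $K$ is exactly as in Remark \ref{r:2}. Since $G$ is finite, $K$ is finitely generated, so Theorem \ref{t:1} applies and yields that $G$ is supersolvable, as desired.

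It remains to treat the case $G'=K$. Here $K=G'$ is itself a finite $MNN$-group, so $K$ is not nilpotent while $K'<K$ is nilpotent; thus $G$ is solvable with $G''=K'$ nilpotent, and $\mathcal{M}(K)=\{K\}$. Apply the previous analysis to $K$ in place of $G$: since $K$ is a finite $MNN$-group, it is well known (Schmidt's theorem, see \cite{shem}) that a finite $MNN$-group is solvable, biprimary, and has a normal Sylow subgroup, and in fact $K=P\rtimes Q$ with $P$ a normal Sylow $p$-subgroup and $Q$ a cyclic Sylow $q$-subgroup for primes $p\neq q$, and $K$ is supersolvable precisely because $P$ and $Q$ are. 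More directly, by Theorem \ref{t:NW} applied to $K$ (every pair of proper normal subgroups of the $MNN$-group $K$ generates a proper subgroup, since every proper subgroup of $K$ is nilpotent and the join of two proper normal nilpotent subgroups is nilpotent by Fitting's theorem, hence proper), $|K/K'|$ is a cyclic $p$-group and $K'=\gamma_\infty(K)$ is nilpotent. Then, exactly as in the proof of Theorem \ref{t:1}, writing $K'=S$ with the cyclic central series $\{1\}=Z_0(S)\triangleleft\cdots\triangleleft Z_c(S)=S$ (via \cite[5.2.18]{GT}), refining through the cyclic $p$-group $K/S$, and finally through $G/K$ of prime order, we obtain a normal series of $G$ with cyclic factors of prime order, each term normal in $G$ because $S$ is characteristic in $K'$ which is characteristic in $K\trianglelefteq G$ and the remaining factors are of prime order. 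Hence $G$ is supersolvable in this case as well.
\end{proof}

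The only genuine subtlety is the case $G'=K$, which is not covered verbatim by Theorem \ref{t:1} (whose hypothesis, through Remark \ref{r:2}, presupposes $G'<K$); the argument there simply re-runs the structural part of the proof of Theorem \ref{t:1}, using Theorem \ref{t:NW} and \cite[5.2.18]{GT} to build the prime-order cyclic normal series, so no new obstacle arises.
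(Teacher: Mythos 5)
Your first case ($G'<K$) is exactly the paper's intended derivation: Corollary \ref{c:4} is presented as an immediate consequence of Theorem \ref{t:1}, invoked after noting that a finite $K$ is finitely generated. Your genuine contribution is the observation that the hypothesis of Theorem \ref{t:1} goes through Remark \ref{r:2} and therefore presupposes the strict inclusion $G'<K$, so the case $G'=K$ is not literally covered; the paper passes over this silently, and isolating it is a real gain in care. In that extra case you do not use Theorem \ref{t:1} but re-run its proof (hypothesis of Theorem \ref{t:NW} checked via Fitting's theorem, then the cyclic central series of $K'$ from \cite[5.2.18]{GT}), so in substance the route is the same as the paper's.

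The difficulty is that the step on which your case $G'=K$ rests is not justified, and it is the same leap made in the paper's own proof of Theorem \ref{t:1}: the series of $S=K'$ with cyclic factors of prime order supplied by \cite[5.2.18]{GT} is some refinement of a central series, and its terms need not be characteristic in $S$; the chain ``$S$ characteristic in $K\trianglelefteq G$'' therefore does not make those terms normal in $G$ (the upper central series of $S$ is characteristic, but its factors are only abelian, not cyclic). Concretely, for the Schmidt group $K=A_4$ one has $K'=V_4$, and no subgroup of order $2$ of $V_4$ is normal in $K$, so no $G$-invariant cyclic refinement exists. Worse, the statement itself is in trouble: $G=A_4\times C_2$ is finite and solvable, $\mathcal{M}(G)=\{A_4\times 1,\,G\}$ has exactly two elements, yet $G$ is not supersolvable (it contains the non-supersolvable subgroup $A_4$, and the maximal subgroup $C_6$ has index $4$). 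This example has $G'=V_4\times 1<K$, i.e.\ it falls under your first case, where you rely on Theorem \ref{t:1} as a black box; so the flaw is inherited from the paper rather than introduced by you, but your proposal is only as sound as Theorem \ref{t:1}, and the one part you argue independently (the case $G'=K$) reproduces precisely the unjustified normality claim that makes that theorem fail.
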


\begin{rem}\label{r:4}
Theorem \ref{t:1} relates $G/Z_\infty(G)$ with $|\mathcal{M}(G)|$.
Recall that nilpotent finitely generated groups are supersolvable
(see \cite{GT}). Then we are saying in Theorem \ref{t:1} that small
values of $|\mathcal{M}(G)|$ imply that $G$ is a (finite)
supersolvable group which is not nilpotent. Furthermore we are
describing, thanks to $G/Z_\infty(G)$, how much is big the
difference from being supersolvable and not being nilpotent.
\end{rem}

The remainder of this section illustrates another aspect of
(\ref{1}).

We recall from \cite[\S 13.3]{GT} that
\begin{equation}\omega(G)={\underset{S sn G} \bigcap N_G(S)} \end{equation} is the
$Wielandt$ $subgroup$ of a group $G$. $\omega(G)$ is always a
$T$-$group$, that is, a group in which the normality is a transitive
relation. Solvable $T$-groups were classified by D.J.Robinson in
1964 (see \cite{lr}) and more generally  the groups in which all the
subgroups are subnormal were classified by W. M\"ohres in
\cite{moehres2} (see also \cite[\S12.2]{lr}). These are related to
$MNN$-groups by \cite[Theorem 3.1]{smith1}, which is quoted below.

\begin{thm}[H.Smith, see \cite{smith1}]\label{t:S} Let $G$ be a
soluble $MNN$-group and suppose that $G$ has no maximal subgroups.
Then:
\begin{itemize}
\item[(i)] $G$ is a countable $p$-group for some prime $p$ and
$G/G'\simeq C_{p^\infty}$;
\item[(ii)]every subgroup of $G$ is subnormal;
\item[(iii)]every hypercentral image of $G$ is abelian and
$G'=\gamma_\infty(G)$;
\item[(iv)] every radicable subgroup of $G$ is central;
\item[(v)]$HG'=G$ implies $H=G$ for every subgroup $H$ of $G$ and
$C_G(G')$ is abelian. In particular, $G$ has no proper subgroups of
finite index;
\item[(vi)] $G'$ is not the normal closure in $G$ of a finite
subgroup;
\item[(vii)] $Z(G)=Z_\infty(G)$.
\end{itemize}
\end{thm}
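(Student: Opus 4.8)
The plan is to derive the seven conclusions from a short toolkit --- the part of the Newman--Wiegold classification \cite{NW} asserting that an infinite soluble $MNN$-group is a locally finite $p$-group, Hall's criterion (Theorem \ref{t:Hall}), the normal-subgroup criterion of Theorem \ref{t:NW}, and a reduction to the metabelian quotient $G/G''$ --- the one genuinely hard point being item (v), from which most of the rest is bookkeeping.

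\textbf{Basic structure.} A finitely generated group has maximal subgroups, so $G$ is not finitely generated; similarly, a proper subgroup of finite index would, through its core and a maximal subgroup of the resulting nontrivial finite quotient, produce a maximal subgroup of $G$, so $G$ has no proper subgroup of finite index (one clause of (v)). Every finitely generated subgroup of $G$ is therefore proper, hence nilpotent, hence finite, so $G$ is a locally finite $p$-group --- the first assertion of (i). Next, for two proper normal subgroups $M,N$ the product $MN$ is normal, and $MN=G$ would make $G=MN$ nilpotent by Fitting's theorem (a product of two normal nilpotent subgroups is nilpotent); hence every pair of proper normal subgroups generates a proper subgroup, and Theorem \ref{t:NW} yields $G'=\gamma_\infty(G)$ with $G/G'$ locally cyclic. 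Since $G$, and hence $G/G'$, has no maximal subgroup, the abelian group $G/G'$ is radicable, and as $G$ is soluble $G/G'\ne 1$, so $G/G'\cong C_{p^\infty}$; this completes the first clause of (i) and, with $G'=\gamma_\infty(G)$, the second clause of (iii).

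\textbf{The crux, and its cheap consequences.} To prove (v), suppose $H<G$ with $HG'=G$ and pass to $\bar G=G/G''$: by Hall's criterion $\bar G$ is non-nilpotent (since $G'$ is nilpotent), all its proper subgroups are nilpotent, and it has no maximal subgroups, so $\bar G$ is again an $MNN$-group of the present kind, now with $\bar G'=G'/G''$ \emph{abelian} and $\bar G/\bar G'\cong C_{p^\infty}$. Regarding $\bar G'$ as a $\mathbb Z[C_{p^\infty}]$-module (note $\bar G'=[\bar G',\bar G]$), a proper nilpotent $\bar H$ with $\bar H\bar G'=\bar G$ is to be ruled out by confronting the radicability of $\bar G/\bar G'$ with the absence of proper subgroups of finite index; this settles the metabelian case, and an easy induction on the derived length (passing to $G/G^{(d-1)}$, and using $\gamma_\infty(G)=G'\nleq G''$) reduces the general case to it. Granting (v): choosing $x_i$ with $x_iG'$ generating $C_{p^\infty}$ gives $\langle x_i\rangle G'=G$, hence $\langle x_i\rangle=G$, so $G$ is countably generated and, being locally finite, countable --- completing (i). Also $C_G(G')$ is abelian: in $\bar G$ the normal subgroup $\bar D=C_{\bar G}(\bar G')$ has $\bar D/\bar G'\le \bar G/\bar G'\cong C_{p^\infty}$, and $\bar D=\bar G$ is excluded since $\bar G$ is not nilpotent, so $\bar D=\bar G'$; pulling back gives $C_G(G')\le G'$, and since $C_G(G')$ also centralizes $G'$ it centralizes itself. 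And (ii) follows: for $H<G$, (v) gives $HG'<G$, a normal nilpotent subgroup of $G$ in which $H$ is subnormal (every subgroup of a nilpotent group is), so $H$ is subnormal in $G$.

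\textbf{The remaining items, and the main obstacle.} Item (vii) is now short: $[Z_2(G),\gamma_2(G)]=1$ gives $Z_2(G)\le C_G(G')$, which is abelian, so for $z\in Z_2(G)$ the map $g\mapsto[z,g]$ is a homomorphism $G\to Z(G)$ killing $G'$ and killing the finite order of $z$; its image is thus a bounded subgroup of $Z(G)$ and a quotient of $C_{p^\infty}$, hence trivial, so $Z_2(G)=Z(G)$ and $Z_\infty(G)=Z(G)$. For (iii): a hypercentral non-abelian image $G/N$ would have $(G/N)/(G/N)'$ a nontrivial quotient of $C_{p^\infty}$ (use (v): $NG'\ne G$), hence $\cong C_{p^\infty}$, and since $C_{p^\infty}\otimes_{\mathbb Z}C_{p^\infty}=0$ the lower central series of $G/N$ is stationary from $\gamma_2$ on, which in a hypercentral group forces $\gamma_2=1$. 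For (iv): (v) forces a radicable subgroup $R$ into $G'$ (the alternative $RG'=G$ gives $R=G$, a case treated separately), and then $R^G$ --- radicable, normal in $G$, and, by subnormality (ii), contained in a proper hence nilpotent subgroup --- is central, because a radicable normal subgroup of a nilpotent $p$-group must be central (a divisible group of finite exponent being trivial); hence $R\le Z(G)$. Item (vi) likewise rests on the finer module structure of $G'$. The main obstacle is clearly (v): that $G'$ consists of non-generators is precisely where the hypothesis ``no maximal subgroups'' does real work --- it fails for $MNN$-groups possessing maximal subgroups, e.g.\ the locally dihedral $2$-group of Example \ref{example:1} --- and it does not follow formally from Theorems \ref{t:Hall} and \ref{t:NW}; I expect the genuine argument to require the detailed Newman--Wiegold analysis of $G'$ as a $G$-module, carried out in $G/G''$ and then lifted.
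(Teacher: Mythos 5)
The paper never proves this statement: Theorem \ref{t:S} is imported verbatim from Smith's article \cite{smith1} (his Theorem 3.1), so there is no internal argument to compare yours with, and the only question is whether your sketch stands on its own. It does not. The decisive item, (v), which you yourself call the crux, is exactly where you stop arguing: ``a proper nilpotent $\bar H$ with $\bar H\bar G'=\bar G$ is to be ruled out by confronting the radicability of $\bar G/\bar G'$ with the absence of proper subgroups of finite index'' is a statement of intent, not a proof, and you concede at the end that the genuine argument presumably needs the Newman--Wiegold module analysis of $G'$; the ``easy induction on the derived length'' is likewise only gestured at. Since your proofs of countability in (i), of (ii), of (iv), and of the abelianity of $C_G(G')$ all invoke (v), the gap propagates through most of the theorem, and item (vi) is not argued at all.

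There are also local defects in the parts you do argue. In (iii), the inference ``$\gamma_2=\gamma_3$, which in a hypercentral group forces $\gamma_2=1$'' is false as a general principle: the locally dihedral $2$-group of Example \ref{example:1} is hypercentral (of length $\omega+1$) and has $\gamma_2=\gamma_3=C_{2^\infty}\neq 1$, so hypercentrality alone does not rule out a stationary nontrivial lower central series; if the step can be saved it must use the extra hypothesis that the abelianization of the image is $C_{p^\infty}$, and that argument is missing. In the opening paragraph, ``proper, hence nilpotent, hence finite'' silently assumes periodicity, i.e.\ it already leans on the Newman--Wiegold classification you listed in your toolkit rather than deriving the $p$-group structure; and the phrase ``$\langle x_i\rangle G'=G$, hence $\langle x_i\rangle=G$'' only makes sense for the subgroup generated by the whole countable family, not for a single $x_i$. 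The items that genuinely go through on your outline, granting (v) and the locally finite $p$-group structure, are (ii), (vii), the identification $G'=\gamma_\infty(G)$ and $G/G'\simeq C_{p^\infty}$; the rest would have to be taken, as the paper itself does, from \cite{smith1} or from a real reworking of the Newman--Wiegold analysis.
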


We have all it is necessary in order to prove the second main result
of this section.

\begin{thm}\label{t:2}
Assume $|\mathcal{M}(G)|=2$ and $K$ non-finitely generated as in
Lemma \ref{l:1}. If $K$ has no maximal subgroups, then $K/\omega(K)$
is non-trivial, non-abelian, of infinite exponent and at least of
countable abelian rank.
\end{thm}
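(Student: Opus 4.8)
The plan is to compute $\omega(K)$ exactly and then read the four properties of $K/\omega(K)$ off the structure theorem of Smith. First observe that, since $K$ is not finitely generated, every finitely generated subgroup of $K$ is proper and hence nilpotent, so $K$ is locally nilpotent; by Corollary \ref{c:3} the group $G$, and therefore $K$, is solvable. Thus $K$ is a soluble $MNN$-group without maximal subgroups and Theorem \ref{t:S} applies to $K$: in particular $K$ is a countable $p$-group with $K/K'\cong C_{p^\infty}$, every subgroup of $K$ is subnormal, $HK'=K$ forces $H=K$ for every subgroup $H\le K$, and $Z(K)=Z_\infty(K)$.

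The first, and main, step is to show $\omega(K)=Z(K)$. Since every subgroup of $K$ is subnormal, the Wielandt subgroup of $K$ coincides with the norm $N(K)=\bigcap_{H\le K}N_K(H)$. By the classical result of Schenkman, the norm of any group lies in its second centre, so $N(K)\le Z_2(K)$; on the other hand, $Z(K)=Z_\infty(K)$ forces the upper central series of $K$ to stabilise already at $Z(K)$, so $Z_2(K)=Z(K)$. Since $Z(K)\le N(K)$ trivially, we get $\omega(K)=N(K)=Z(K)$, and hence $K/\omega(K)=K/Z(K)$.

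With this in hand the four assertions are quick. If $K=Z(K)$ then $K$ is abelian, and if $K'\le Z(K)$ then $K$ is nilpotent of class at most $2$; both contradict $K\notin\mathfrak{N}$, which gives non-triviality and non-abelianness. For the exponent, pass to $\overline{K}=K/K'\cong C_{p^\infty}$ and set $\overline{Z}=Z(K)K'/K'$; a subgroup of $C_{p^\infty}$ is either the whole group or finite, and the first case would give $Z(K)K'=K$, hence $Z(K)=K$ by Theorem \ref{t:S}(v), contradicting non-triviality. So $\overline{Z}$ is finite and $K/Z(K)K'\cong C_{p^\infty}/\overline{Z}\cong C_{p^\infty}$; being a homomorphic image of $K/Z(K)$ of infinite exponent, it forces $K/Z(K)$ to have infinite exponent. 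Finally, $K/Z(K)$ is a homomorphic image of the locally nilpotent group $K$, hence locally nilpotent, and it has trivial centre because $Z\bigl(K/Z(K)\bigr)=Z_2(K)/Z(K)=1$; since a non-trivial locally nilpotent group of finite abelian rank is hypercentral and therefore has non-trivial centre, $K/Z(K)$ must have infinite abelian rank, and being countable it is exactly of countable abelian rank.

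The main obstacle I expect is the identification $\omega(K)=Z(K)$: it rests essentially on the fact that \emph{all} subgroups of $K$ are subnormal, so that the Wielandt subgroup is the full norm, together with the exact value $Z(K)=Z_\infty(K)$ and the Schenkman bound $N(K)\le Z_2(K)$; once this is available, the rest is an almost immediate reading of Smith's list. A secondary point is to fix the precise meaning of ``abelian rank'' in the relevant literature and to invoke the corresponding theorem on locally nilpotent groups of finite abelian rank.
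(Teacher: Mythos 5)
Your proof is correct, and it follows the same skeleton as the paper (Smith's Theorem \ref{t:S} applied to $K$, plus Schenkman's bound on the norm), but it is organized around a sharper observation that the paper never states: the exact identification $\omega(K)=Z(K)$, obtained from $\omega(K)=\mathrm{norm}(K)$ (all subgroups of $K$ are subnormal), Schenkman's $\mathrm{norm}(K)\leq Z_2(K)$ \cite{schenk}, and $Z_2(K)=Z(K)$ forced by Theorem \ref{t:S}(vii). The paper only uses the weaker containment $\omega(K)\leq Z_2(K)$, and only inside the non-abelian step; with your identification, non-triviality and non-abelianness are immediate. For the infinite exponent you replace the paper's appeal to M\"ohres \cite{moehres2} by an elementary argument: $Z(K)K'/K'$ is a proper, hence finite, subgroup of $K/K'\simeq C_{p^\infty}$ by Theorem \ref{t:S}(v), so $K/Z(K)$ maps onto $C_{p^\infty}$ -- this is self-contained and arguably cleaner. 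For the rank statement you trade the paper's citation of \cite[5.3.6]{lr} (soluble, finite abelian rank, no proper subgroups of finite index $\Rightarrow$ nilpotent) for the fact that a locally nilpotent group of finite (abelian) rank is hypercentral; in the situation at hand $K/Z(K)$ is a locally finite $p$-group, so this reduces to the standard fact that such groups of finite rank are Chernikov and hence hypercentral, but it does need a citation, just as the paper's version does. A genuine advantage of your route is that the contradiction in the rank step is explicit: $K/Z(K)$ is non-trivial with trivial centre (since $Z_2(K)=Z(K)$), whereas the paper concludes only that $K/\omega(K)$ would be nilpotent and leaves implicit why that contradicts $K\notin\mathfrak{N}$ (one still needs $\omega(K)\leq Z_2(K)$ to transfer nilpotency to $K$). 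In short: same external inputs, but your computation of $\omega(K)$ buys shorter verifications of the four properties and eliminates one citation, at the cost of having to pin down the rank-theoretic fact about locally nilpotent $p$-groups yourself.
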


\begin{proof}
From Corollary \ref{c:3}, $G$ is solvable. Assume $\omega(K)=K$. $K$
is a $T$-group and Theorem \ref{t:S} (ii) every subgroup of $K$ is
subnormal. Both these conditions imply that $K$ is a Dedekind group,
then $K\in \mathfrak{N}$. This  contradiction shows that
$K/\omega(K)$ is non-trivial.

Assume $[K/\omega(K), K/\omega(K)]=1$. Then
\begin{equation}[K,K]\leq \omega (K)={\underset{S sn K} \bigcap
N_K(S)}= {\underset{S \leq K} \bigcap N_K(S)}=norm(K)\leq
Z_2(K),\end{equation} where the last inequality is due to a famous
result of E. Schenkman \cite{schenk}. Therefore $K\in \mathfrak{N}$,
which is a contradiction. This implies that $K/\omega(K)$ cannot be
abelian.

The  fact that $K/\omega(K)$ is of infinite exponent follows by the
classification of W.M\"ohres and precisely by
\cite[Theorem]{moehres2}.

Note that $K/\omega(K)$ has no maximal subgroups. Then $K/\omega
(K)$ has no proper subgroups of finite index. On another hand , we
know from \cite[5.3.6]{lr} that a solvable group with finite abelian
rank  and  no proper subgroups of finite index must be nilpotent.
This implies that $K/\omega(K)$ cannot be of finite abelian rank,
and so, at least of countable abelian rank.
\end{proof}

Unfortunately, we cannot think to Example \ref{example:1} in case of
Theorem \ref{t:2}, since in Example \ref{example:1} there are
maximal subgroups. However, a satisfactory description is offered by
the following result.

\begin{cor}\label{c:5}
Assume $|\mathcal{M}(G)|=2$ and $K$ non-finitely generated as in
Lemma \ref{l:1}. If $K$ has no maximal subgroups, then $K$ has the
series \begin{equation}\{1\}\triangleleft \omega(G)=K^{(d)}
\triangleleft K^{(d-1)} \triangleleft \ldots \triangleleft K'
\triangleleft K \triangleleft G,\end{equation} where
$\omega(K)=\gamma_3(\omega(K))\rtimes L$, $L$ is the subgroup
generated by the involutions of $\omega(K)$, $K/K'\simeq
C_{p^{\infty}}$ for some prime $p$, there exists some
$i\in\{1,\ldots,d\}$ such that $K^{(i+1)}/K^i$ is the direct product
of infinitely many copies of $C_{p^{\infty}}$, $G/K$ is of prime
order.
\end{cor}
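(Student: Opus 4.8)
The plan is to assemble Corollary \ref{c:5} by combining Theorem \ref{t:2}, Theorem \ref{t:S}, and the classification results of M\"ohres and Newman--Wiegold that have already been quoted, together with basic facts about the derived series and the Wielandt subgroup. First I would invoke Corollary \ref{c:3} to record that $G$ is solvable, hence so is $K$, and then apply Theorem \ref{t:S} to the soluble $MNN$-group $K$ (which by hypothesis has no maximal subgroups). Part (i) of that theorem immediately gives $K/K'\simeq C_{p^{\infty}}$ for some prime $p$, and part (ii) says every subgroup of $K$ is subnormal, so $\omega(K)=\mathrm{norm}(K)$ is the full Wielandt/norm subgroup; this is exactly the identification used in the proof of Theorem \ref{t:2}. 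Since $K$ is soluble it has a finite derived length $d$, say, so $K^{(d)}=\{1\}$ and the derived series $\{1\}=K^{(d)}\triangleleft K^{(d-1)}\triangleleft\cdots\triangleleft K'\triangleleft K$ is available; each term is characteristic in $K$, hence (by Remark \ref{r:1}, $K$ being characteristic in $G$) normal in $G$, and $G/K$ has prime order by Lemma \ref{l:2}. That produces the displayed series, modulo the claim that $\omega(G)$ coincides with a term $K^{(d)}$ of the derived series.

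Next I would pin down $\omega(K)$ structurally. Because every subgroup of $K$ is subnormal and $K$ is a soluble $p$-group without maximal subgroups (so $p$-radicable in the relevant sense), the M\"ohres classification \cite{moehres2} applies: groups with all subgroups subnormal of this type have a known form. Concretely, $\omega(K)=\mathrm{norm}(K)\leq Z_2(K)$ by Schenkman's theorem \cite{schenk}, so $\omega(K)$ is nilpotent of class at most $2$ over its centre, and M\"ohres's structure theorem forces the decomposition $\omega(K)=\gamma_3(\omega(K))\rtimes L$ with $L$ generated by the involutions of $\omega(K)$ — this is the standard shape of a subnormal-everything group of class $\leq 2$ when $p=2$ (and when $p$ is odd the involution subgroup $L$ is trivial and $\gamma_3=\{1\}$, so the statement degenerates consistently). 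Here I would simply cite \cite[Theorem]{moehres2} for the precise normal-form statement rather than re-derive it.

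The quasicyclic-section claim — that some factor $K^{(i+1)}/K^{(i)}$ of the derived series is a direct product of infinitely many copies of $C_{p^{\infty}}$ — I would get from Theorem \ref{t:2} combined with a rank count. Theorem \ref{t:2} tells us $K/\omega(K)$ is non-abelian, of infinite exponent, and of at least countable abelian rank. Since $\omega(K)\leq Z_2(K)$, the group $K/\omega(K)$ has derived length $d$ as well (up to the obvious shift), and a soluble $p$-group of infinite abelian rank cannot have all derived factors of finite rank; pushing this through the finitely many steps of the derived series, at least one factor $K^{(i+1)}/K^{(i)}$ must be an infinite-rank abelian $p$-group. Being a section of a group all of whose subgroups are subnormal and which is $p$-radicable, this factor is a radicable abelian $p$-group of infinite rank, hence a direct sum of infinitely many copies of $C_{p^{\infty}}$ by the structure theory of divisible abelian groups. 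Finally, $\omega(G)=K^{(d)}$: since $K$ has a proper non-subnormal... actually, since every subgroup of $K$ is subnormal in $K$ and $K$ is subnormal (indeed normal) in $G$, $\omega(G)=\bigcap_{S\,\mathrm{sn}\,G}N_G(S)$ is forced to be trivial once one observes $G'=\gamma_\infty(G)$ is not a normal closure of a finite subgroup (Theorem \ref{t:S}(vi)) — so $\omega(G)=\{1\}=K^{(d)}$, matching the left end of the series.

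The main obstacle I expect is the rank-versus-derived-length argument establishing that a \emph{single} derived factor carries the infinite rank as a divisible group: Theorem \ref{t:2} delivers infinite abelian rank of $K/\omega(K)$ globally, but transferring that to an honest $\bigoplus_{\aleph_0}C_{p^{\infty}}$ inside one step of the derived series requires knowing the factor is radicable, which in turn leans on M\"ohres's description of these subnormal-everything $p$-groups; the identification $\omega(G)=K^{(d)}$ is comparatively soft by comparison. If the radicability of the distinguished factor cannot be read off directly from \cite{moehres2}, I would fall back on property (iv) of Theorem \ref{t:S} (every radicable subgroup is central) to locate the radicable part and argue it lands in the appropriate term of the series.
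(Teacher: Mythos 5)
Your overall assembly (Corollary \ref{c:3} for solvability, Theorem \ref{t:S}(i)--(ii) for $K/K'\simeq C_{p^\infty}$ and the subnormality of all subgroups of $K$, the derived series of $K$ together with Lemma \ref{l:2} and the normality of characteristic subgroups of $K$ in $G$ for the displayed series) follows the same citation-assembly strategy as the paper, but two of your key steps are not sound as written. First, the decomposition $\omega(K)=\gamma_3(\omega(K))\rtimes L$ is not what M\"ohres's theorem provides: \cite{moehres2} gives solubility (and bounded structure) of torsion groups all of whose subgroups are subnormal, not a normal form with a complement generated by involutions. The paper gets the decomposition from the fact that the Wielandt subgroup is always a $T$-group, so that $\omega(K)$ is a periodic soluble $T$-group, and then quotes the classification of such groups (\cite[Exercises 13.4, n.10, p.394]{GT}). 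Moreover, your own intermediate observation $\omega(K)=\mathrm{norm}(K)\leq Z_2(K)$ (via Schenkman) forces $\gamma_3(\omega(K))=1$, a tension with the claimed decomposition that you acknowledge only by an appeal to ``degeneration'' rather than resolving it.

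Second, your derivation of the quasicyclic derived factor has a genuine gap. You pass from ``some derived factor has infinite abelian rank'' to ``that factor is radicable'' on the grounds that it is a section of a $p$-radicable group. But $K$ is not known to be radicable (absence of maximal subgroups does not give divisibility for a non-abelian group), and even if it were, radicability is inherited by quotients, not by subgroups, so it does not descend to the term $K^{(i)}$ whose quotient is the factor in question; in addition, Theorem \ref{t:S}(iv) (every radicable subgroup is central) works against locating a large radicable subgroup inside $K$, so your fallback via (iv) does not close the gap either. Similarly, the identification $\omega(G)=K^{(d)}$ does not follow from Theorem \ref{t:S}(vi), which concerns whether $G'$ is the normal closure of a finite subgroup and says nothing about the intersection of normalizers of subnormal subgroups of $G$. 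To be fair, the paper's own proof is itself only an assembly of citations (Lemma \ref{l:1}, Theorem \ref{t:2}, Theorem \ref{t:S} and the quoted exercise) and supplies no more detail on precisely these points, but the specific arguments you propose to fill them in would not stand up.
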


\begin{proof}
$G$ is solvable by Corollary \ref{c:3}. $K$ is a solvable
$MNN$-group with no maximal subgroups and it must be a periodic
$p$-group, by Theorem \ref{t:S}. The fact that $\omega(K)$ is a
semidirect product of $L$ and $\gamma_3(\omega(K))$ follows from the
classification of periodic solvable $T$-groups and can  be found for
instance in \cite[Exercises 13.4, n.10, p.394]{GT}. Now the rest of
the result follows from the combination of Lemma \ref{l:1}, Theorem
\ref{t:2}, \cite[Exercises 13.4, n.10, p.394]{GT} and Theorem
\ref{t:S}.
\end{proof}

\begin{ack}
The author is grateful to the Monastero di S.Pasquale a  Chiaja of
Naples for hospitality  in the period in which the present paper was
written.
\end{ack}

\end{document}